\theoremstyle{theorem}
\newtheorem*{theorem}{Theorem}
\newtheorem{lemma}{Lemma}
\theoremstyle{definition}
\newtheorem{remark}{Remark}
\newcommand{\R}{\mathbb{R}}
\newcommand{\N}{\mathbb{N}}
\newcommand{\p}{\varphi}
\newcommand{\w}{\widetilde}
\newcommand{\oo}{\overline}
\newcommand{\Om}{\Omega}
\newcommand{\I}{\mathscr{I}}
\newcommand{\Or}{\mathrm{O}}
\begin{document}
\title[Almost orthogonally additive functions]{Almost orthogonally additive functions}

\author[T. Kochanek]{Tomasz Kochanek}
\address{Institute of Mathematics\\ University of Silesia\\
Bankowa 14\\ 40-007 Katowice, Poland}
\email{tkochanek@math.us.edu.pl}
\author[W. Wyrobek-Kochanek]{Wirginia Wyrobek-Kochanek}
\address{Institute of Mathematics\\ University of Silesia\\
Bankowa 14\\ 40-007 Katowice, Poland}
\email{wwyrobek@math.us.edu.pl}

\subjclass[2010]{Primary 39B55; Secondary 58A05}
\keywords{Orthogonally additive function, ideal of sets}
\thanks{This research has been supported by the scholarship from the UPGOW project co-financed by the European Social Fund}

\begin{abstract}
If a function $f$, acting on a Euclidean space $\R^n$, is \lq\lq almost\rq\rq\ orthogonally additive in the sense that $f(x+y)=f(x)+f(y)$ for all $(x,y)\in\bot\setminus Z$, where $Z$ is a \lq\lq negligible\rq\rq\ subset of the $(2n-1)$-dimensional manifold $\bot\subset\R^{2n}$, then $f$ coincides almost everywhere with some orthogonally additive mapping.
\end{abstract}

\maketitle

\section{Introduction}Let $(E,\langle\cdot\vert\cdot\rangle)$ be a real inner product space, $\dim E\geq 2$, and let $(G,+)$ be an Abelian group. A~function $f\colon E\to G$ is called {\it orthogonally additive\/} iff it satisfies the equation
\begin{equation}\label{OA}
f(x+y)=f(x)+f(y)
\end{equation}
for all $(x,y)\in\bot:=\{(x,y)\in E^2: \langle x\vert y\rangle=0\}$. It was proved independently by R.~Ger, Gy.~Szab\'o and J.~R\"atz \cite[Corollary 10]{ratz} that such a function has the form
\begin{equation}\label{1}
f(x)=a\left(\| x\|^2\right)+b(x)
\end{equation}
with some additive mappings $a\colon\R\to G$, $b\colon E\to G$ provided that $G$ is uniquely $2$-divisible. This divisibility assumption was dropped by K.~Baron and J.~R\"atz \cite[Theorem 1]{baron_ratz}.

We are going to deal with the situation where equality \eqref{OA} holds true for all orthogonal pairs $(x,y)$ outside from a \lq\lq negligible\rq\rq\ subset of $\bot$. Considerations of this type go back to a problem \cite{erdos}, posed by P. Erd\H{o}s, concerning the unconditional version of Cauchy's functional equation \eqref{OA}. It was solved by N.~G.~de Bruijn \cite{bruijn} and, independently, by W.~B.~Jurkat \cite{jurkat}, and also generalized by R.~Ger \cite{ger2}. Similar research concerning mappings which preserve inner product was made by J.~Chmieli\'nski and J.~R\"atz \cite{chmielinski_ratz} and by J.~Chmieli\'nski and R.~Ger \cite{chmielinski_ger}.

While studying unconditional functional equations, \lq\lq negligible\rq\rq\ sets are usually understood as the members of some proper linearly invariant ideal. Moreover, any such ideal of subsets of an underlying space $X$ automatically generates another such ideal of subsets of $X^2$ via the Fubini theorem (see R. Ger \cite{ger1} and M. Kuczma \cite[\S 17.5]{kuczma}). However, we shall assume that equation \eqref{OA} is valid for $(x,y)\in\bot\setminus Z$, where $Z$ is \lq\lq negligible\rq\rq\ in $\bot$ (not only in $E^2$), and therefore the structure of  $\bot$ should be appropriate to work with \lq\lq linear invariance\rq\rq\ and Fubini-type theorems. This is the reason why we restrict our attention to Euclidean spaces $\R^n$ and regard $\bot$ as a smooth $(2n-1)$-dimensional manifold lying in $\R^{2n}$.

\section{Preliminary results}For completeness let us recall some definitions concerning the manifold theory (for further information see, e.g., R. Abraham, J. E. Marsden and T. Ratiu \cite{abraham}, and L. W. Tu \cite{tu}). Let $S$ be a topological space; by an {\it $m$-dimensional $\mathcal{C}^\infty$-atlas\/} we mean a family $\mathcal{A}=\{(U_i,\p_i)\}_{i\in I}$ such that $\{U_i\}_{i\in I}$ is an open covering of $S$, for each $i\in I$ the mapping $\p_i$ is a homeomorphism which maps $U_i$ onto an open subset of $\R^m$, and for each $i,j\in I$ the mapping $\p_i\circ\p_j^{-1}$ is a $\mathcal{C}^\infty$-diffeomorphism defined on $\p_j(U_i\cap U_j)$. Brouwer's theorem of dimension invariance implies that each two atlases on $S$ are of the same dimension.

We say that atlases $\mathcal{A}_1$ and $\mathcal{A}_2$ are {\it equivalent\/} iff $\mathcal{A}_1\cup\mathcal{A}_2$ is an atlas. A {\it $\mathcal{C}^\infty$-differentiable structure\/} $\mathcal{D}$ on $S$ is an equivalence class of atlases on $S$; the union $\bigcup\mathcal{D}$ forms a maximal atlas on $S$ and any of its element is called an {\it admissible chart\/}. By a {\it $\mathcal{C}^\infty$-differentiable manifold\/} (briefly: {\it manifold\/}) $M$ we mean a pair $(S,\mathcal{D})$ of a topological space $S$ and a $\mathcal{C}^\infty$-differentiable structure $\mathcal{D}$ on $S$; we shall then identify $M$ with the space $S$ for convenience. A manifold is called an {\it $m$-manifold\/} iff its every atlas is $m$-dimensional.

Having an $m_1$-manifold $M_1=(S_1,\mathcal{D}_1)$ and an $m_2$-manifold $M_2=(S_2,\mathcal{D}_2)$ we may define the {\it product manifold\/} $M_1\times M_2=(S_1\times S_2,\mathcal{D}_1\times\mathcal{D}_2)$, where the differentiable structure $\mathcal{D}_1\times\mathcal{D}_2$ is generated by the atlas $$\left\{(U_1\times U_2,\p_1\times\p_2):\, (U_i,\p_i)\in\bigcup\mathcal{D}_i\mbox{ for }i=1,2\right\}.$$Then $M_1\times M_2$ forms an $(m_1+m_2)$-manifold. For an arbitrary set $A\subset M_1\times M_2$ and any point $x\in M_1$ we use the notation $A[x]=\{y\in M_2:\, (x,y)\in A\}$.

In what follows, we consider only manifolds $M\subset\R^n$, for some $n\in\N$, equipped with the natural topology and a differentiable structure which is determined by the following condition: for every $x\in M$ there is a $\mathcal{C}^\infty$-diffeomorphism $\p$ defined on an open set $U\subset\R^n$ with $x\in U$ such that $\p(M\cap U)=\p(U)\cap (\R^m\times\{ 0\})$, where $m$ is the dimension of $M$. In particular, every open subset of $\R^n$ yields an $n$-manifold with the atlas consisting of a single identity map. Any set $M\subset\R^n$ satisfying the above condition forms a {\it submanifold\/} of $\R^n$ in the sense of \cite[Definition 3.2.1]{abraham}, or a {\it regular submanifold\/} of $\R^n$ in the sense of \cite[Definition 9.1]{tu}. Generally, if $M_1$ is an $m_1$-manifold and $M_2$ is an $m_2$-manifold, then $M_1$ is called a ({\it regular\/}) {\it submanifold\/} of $M_2$ iff $M_1\subset M_2$ and for every $x\in M_1$ there is an admissible chart $(U,\p)$ of $M_2$ with $x\in U$ such that $\p(M_1\cap U)=\p(U)\cap (\R^{m_1}\times\{0\})$.

If $M_1$ and $M_2$ are manifolds with atlases $\mathcal{A}_1$ and $\mathcal{A}_2$, respectively, then a mapping $\Phi\colon M_1\to M_2$ is said to be {\it of the class $\mathcal{C}^\infty$\/} iff it is continuous and for all $(U,\p)\in\mathcal{A}_1$, $(V,\psi)\in\mathcal{A}_2$ the composition $\psi\circ\Phi\circ\p^{-1}$ is of the class $\mathcal{C}^\infty$ (in the usual sense) in its domain. This condition is independent on the choice of particular atlases generating differentiable structures of $M_1$ and $M_2$; see \cite[Proposition 3.2.6]{abraham}. We say that $\Phi$ is a {\it $\mathcal{C}^\infty$-diffeomorphism\/} iff $\Phi$ is a bijection between $M_1$ and $M_2$, and both $\Phi$ and $\Phi^{-1}$ are of the class $\mathcal{C}^\infty$. According to the above explanation, such a definition is compatible with the usual notion of a $\mathcal{C}^\infty$-diffeomorphism. If any $\mathcal{C}^\infty$-diffeomorphism between $M_1$ and $M_2$ exists, then we write $M_1\sim M_2$. Of course, in such a case the manifolds $M_1$ and $M_2$ are of the same dimension.

Finally, a mapping $\Phi\colon M_1\to M_2$ between an $m_1$-manifold $M_1$ and an $m_2$-manifold $M_2$ is called a $\mathcal{C}^\infty$-{\it immersion\/} [$\mathcal{C}^\infty$-{\it submersion\/}] iff it is of the class $\mathcal{C}^\infty$ and for every $x\in M_1$ there exist admissible charts $(U,\p)$ and $(V,\psi)$ of $M_1$ and $M_2$, respectively, such that $x\in U$, $\Phi(x)\in V$, and the derivative of the function $\psi\circ\Phi\circ\p^{-1}$ at any point of $\p(U)$ is an injective [a surjective] linear mapping from $\R^{m_1}$ to $\R^{m_2}$ (see \cite[Proposition 8.12]{tu} for another, equivalent definition). We will find the following lemma useful; for the proof see R. W. R. Darling \cite[\S 5.5.1]{darling}.
\begin{lemma}\label{submersion}
Let $M_1$ be a submanifold of an open set $U\subset\R^{n_1}$ and $M_2$ be a submanifold of an open set $V\subset\R^{n_2}$. If $\Phi\colon U\to V$ is a $\mathcal{C}^\infty$-immersion {\rm [\/}$\mathcal{C}^\infty$-submersion{\rm ]\/} with $\Phi(M_1)\subset M_2$, then the restriction $\Phi|_{M_1}\colon M_1\to M_2$ is a $\mathcal{C}^\infty$-immersion {\rm [\/}$\mathcal{C}^\infty$-submersion{\rm ]\/}.
\end{lemma}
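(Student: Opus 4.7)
The plan is to reduce the claim to a local coordinate computation using submanifold-straightening charts, and then to exploit the block structure of the Jacobian that is forced by the containment $\Phi(M_1)\subset M_2$.

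Fix $x_0\in M_1$. By the definition of a regular submanifold of $\R^{n_i}$ recalled just above the lemma, one can choose open neighborhoods $U_0\subset U$ of $x_0$ and $V_0\subset V$ of $\Phi(x_0)$, together with $\C^\infty$-diffeomorphisms $\varphi\colon U_0\to\varphi(U_0)$ and $\psi\colon V_0\to\psi(V_0)$ onto open subsets of $\R^{n_1}$ and $\R^{n_2}$, satisfying $\varphi(M_1\cap U_0)=\varphi(U_0)\cap(\R^{m_1}\times\{0\})$ and $\psi(M_2\cap V_0)=\psi(V_0)\cap(\R^{m_2}\times\{0\})$. After shrinking $U_0$ so that $\Phi(U_0)\subset V_0$, set $F:=\psi\circ\Phi\circ\varphi^{-1}$; the coordinate representation of $\Phi|_{M_1}$ in the induced charts of $M_1$ and $M_2$ is then $y\mapsto\pi_{m_2}(F(y,0))$, where $\pi_{m_2}$ denotes projection onto the first $m_2$ coordinates of $\R^{n_2}$. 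This is manifestly $\C^\infty$, proving smoothness of $\Phi|_{M_1}$.

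Now write $F=(F_1,F_2)$ with $F_1\colon\varphi(U_0)\to\R^{m_2}$ and $F_2\colon\varphi(U_0)\to\R^{n_2-m_2}$. The hypothesis $\Phi(M_1)\subset M_2$ forces $F_2(y,0)\equiv 0$; differentiating in the first $m_1$ variables yields $\partial F_2/\partial y\big|_{(y,0)}=0$, so, with $z$ denoting the last $n_1-m_1$ coordinates,
\[
DF(y,0)=\begin{pmatrix} A & B \\ 0 & C \end{pmatrix},\qquad A=\tfrac{\partial F_1}{\partial y},\ B=\tfrac{\partial F_1}{\partial z},\ C=\tfrac{\partial F_2}{\partial z}.
\]
Crucially, $A$ is exactly the derivative at $y$ of the coordinate representation of $\Phi|_{M_1}$.

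In the immersion case, $F$ inherits the immersion property because pre- and post-composition with diffeomorphisms preserves injectivity of the derivative; hence $DF(y,0)$ is injective. Any $v\in\ker A$ yields $(v,0)\in\ker DF(y,0)$ by the block form, forcing $v=0$, so $A$ is injective and $\Phi|_{M_1}$ is an immersion. For the submersion case one argues analogously that surjectivity of $DF(y,0)$ forces surjectivity of $A$; this is the step I expect to be the main obstacle, since unlike the immersion case it is not a purely formal consequence of the block structure alone and genuinely requires the geometric content of Darling's argument, namely that the surjection $D\Phi(x_0)\colon\R^{n_1}\to\R^{n_2}$ descends to a surjection $T_{x_0}M_1\twoheadrightarrow T_{\Phi(x_0)}M_2$, which under the straightening identifications is exactly $A$.
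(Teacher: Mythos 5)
The paper offers no proof of this lemma at all --- it is quoted from Darling, \S 5.5.1 --- so your attempt has to stand on its own. The immersion half does: the straightening charts, the identity $F_2(y,0)\equiv 0$ forced by $\Phi(M_1)\subset M_2$, the resulting vanishing of $\partial F_2/\partial y$ along $\R^{m_1}\times\{0\}$, and the implication $Av=0\Rightarrow(v,0)\in\ker DF(y,0)\Rightarrow v=0$ constitute a complete and correct argument, valid at every point of the chart domain as the paper's definition of immersion requires.

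The submersion half is a genuine gap, and the missing step is not merely ``the geometric content of Darling's argument'': under the stated hypotheses there is no surjection $T_{x_0}M_1\to T_{\Phi(x_0)}M_2$ to descend to, because the submersion assertion is in fact false in this generality. Take $U=\R^3$, $V=\R^2$, $\Phi(x,y,z)=(y,z)$ (a $\mathcal{C}^\infty$-submersion), $M_1=\R\times\{0\}\times\{0\}$ and $M_2=\R\times\{0\}$; then $\Phi(M_1)=\{(0,0)\}\subset M_2$, yet $\Phi|_{M_1}$ is constant and hence not a submersion onto the $1$-manifold $M_2$. Your own block form shows exactly where the formal argument breaks: surjectivity of $\bigl(\begin{smallmatrix}A&B\\0&C\end{smallmatrix}\bigr)$ can be carried entirely by the columns of $B$ and $C$, i.e., by directions transverse to $M_1$, leaving $A$ arbitrarily degenerate. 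A correct version of the submersion statement needs an extra hypothesis such as $M_1=\Phi^{-1}(M_2)$, for then $T_xM_1=D\Phi(x)^{-1}\bigl(T_{\Phi(x)}M_2\bigr)$, which $D\Phi(x)$ does map onto $T_{\Phi(x)}M_2$. Note that even this stronger version would not cover the paper's one application of the submersion case, namely $\pi_i=\oo{\pi}_i|_{\Or(n)}\colon\Or(n)\to S^{n-1}$ in Lemma \ref{sfera}, since $\Or(n)$ is a proper subset of $\oo{\pi}_i^{-1}(S^{n-1})$; there the submersion property is true but needs a separate verification (e.g., $D\pi_i(\boldsymbol{A})(\boldsymbol{A}\boldsymbol{X})=\boldsymbol{A}\boldsymbol{X}e_i$ for skew-symmetric $\boldsymbol{X}$, whose image is $\boldsymbol{A}(e_i^\perp)=T_{\boldsymbol{A}e_i}S^{n-1}$). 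So the obstacle you flagged is real, and no purely local linear-algebra argument will remove it.
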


Recall that given a non-empty set $X$ a family $\I\subset 2^X$ is said to be a {\it proper $\sigma$-ideal\/} iff the following conditions hold:
\begin{itemize}
\item[(i)] $X\not\in\I$;
\item[(ii)] if $A\in\I$ and $B\subset A$, then $B\in\I$;
\item[(iii)] if $A_k\in\I$ for $k\in\N$, then $\bigcup_{k=1}^\infty A_k\in\I$.
\end{itemize}
From now on we suppose that for each $m\in\N$ a family $\I_m$ forms a proper $\sigma$-ideal of subsets of $\R^m$ satisfying the following conditions:
\begin{itemize}
\item[(H$_0$)] $\{ 0\}\in\I_1$;
\item[(H$_1$)] if $\p$ is a $\mathcal{C}^\infty$-diffeomorphism defined on an open set $U\subset\R^m$ and $A\in\I_m$, then $\p(A\cap U)\in\I_m$;
\item[(H$_2$)] if $m,n\in\N$ and $A\in\I_{m+n}$, then $\{x\in\R^m:\, A[x]\not\in\I_n\}\in\I_m$;
\item[(H$_3$)] if $m,n\in\N$ and $A\in\I_n$, then $\R^m\times A\in\I_{m+n}$.
\end{itemize}
Note that by condition (H$_1$), non-empty open subsets of $\R^m$ do not belong to $\I_m$, whereas (H$_0$) and (H$_1$) imply that any countable subset of $\R^m$ is in $\I_m$.
\begin{remark}\label{Rem1}
The conditions (H$_0$)-(H$_3$) are satisfied in the following cases:
\begin{itemize*}
\item[(a)] when $\I_m$ consists of all first category subsets of $\R^m$, for $m\in\N$ (in this case (H$_2$) follows from the Kuratowski--Ulam theorem);
\item[(b)] when $\I_m$ consists of all Lebesgue measure zero subsets of $\R^m$, for $m\in\N$ (in this case (H$_2$) is just the classical Fubini theorem).
\end{itemize*}
More generally, let $\mu$ be any measure defined on all Borel subsets of $\R$ and satisfying both (H$_0$) and (H$_1$). Let also $$\mu_m=\underbrace{\mu\otimes\cdots\otimes\mu}_m$$ be the $m$th product measure and $\w\mu_m$ be the completion of $\mu_m$, for $m\in\N$. Then (H$_0$)-(H$_3$) are also satisfied in the two following cases:
\begin{itemize*}
\item[(c)] when $\I_m$ consists of all Borel subsets $A$ of $\R^m$ with $\mu_m(A)=0$ (condition (H$_1$) follows by induction from Fubini's theorem applied to the characteristic function of the Borel set $\p(A\cap U)$); 
\item[(d)] when $\I_m$ consists of all $\mu_m$-negligible subsets of $\R^m$, i.e., all $\w\mu_m$-measurable sets $A\subset\R^m$ with $\w\mu_m(A)=0$ (if $A\in\I_m$ then $A$ is contained in a~Borel set having measure $\mu_m$ zero, thus condition (H$_1$) follows as in the preceding case).
\end{itemize*}
\end{remark}

For an arbitrary $m$-manifold $M\subset\R^n$ ($m\leq n$) with an atlas $\mathcal{A}=\{(U_i,\p_i)\}_{i\in I}$ we define a proper $\sigma$-ideal $\I_M\subset 2^M$ by putting 
\begin{equation}\label{def_IM}
\I_M=\{A\subset M:\, \p_i(A\cap U_i)\in\I_m\mbox{ for each }i\in I\}.
\end{equation}
By condition (H$_1$), this definition does not depend on the particular choice of $\mathcal{A}$. Indeed, let $\{(V_j,\psi_j)\}_{j\in J}$ be another atlas of $M$, equivalent to $\mathcal{A}$. Fix any $A\in\I_M$ and $j\in J$. With the aid of Lindel\"of's theorem we choose a countable set $I_0\subset I$ such that $V_j\subset\bigcup_{i\in I_0}U_i$. For each $i\in I_0$ the mapping $\chi_i:=\psi_j\circ\p_i^{-1}$ is a $\mathcal{C}^\infty$-diffeomorphism on $\p_i(V_j\cap U_i)$ and since $B_i:=\p_i(A\cap V_j\cap U_i)\in\I_m$, we have $\psi_j(A\cap V_j\cap U_i)=\chi_i(B_i)\in\I_m$. Consequently, $\psi_j(A\cap V_j)=\bigcup_{i\in I_0}\psi_j(A\cap V_j\cap U_i)\in\I_m$. This shows that if $A\in\I_M$, then $\psi_j(A\cap V_j)\in\I_m$ for each $j\in J$. Analogously we obtain the reverse implication. Note that, by this definition, $\I_{\R^m}=\I_m$ for each $m\in\N$.

\begin{lemma}\label{dim}
Let $M_1$ be an $m_1$-dimensional submanifold of an $m_2$-manifold $M_2\subset\R^n$. Then
\begin{itemize}
\item[{\rm (a)\/}] $M_1\in\I_{M_2}$, provided that $m_1<m_2$;
\item[{\rm (b)\/}] $\I_{M_1}\subset\I_{M_2}$.
\end{itemize}
\end{lemma}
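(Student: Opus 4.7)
The plan is to combine the adapted-chart definition of a submanifold with the atlas-independence of $\I_M$ established just before the lemma, and to handle the cases $m_1<m_2$ and $m_1=m_2$ separately. The only delicate point arises in part (a): one must arrange the adapted charts to sit inside a prescribed chart of $M_2$ so that the transition maps have the correct domains, which is no obstacle beyond bookkeeping.

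For part (a), first observe that $\R^{m_1}\times\{0\}^{m_2-m_1}\in\I_{m_2}$: by (H$_0$) one has $\{0\}\in\I_1$, then (H$_3$) yields $\R^{m_2-1}\times\{0\}\in\I_{m_2}$, and the obvious inclusion together with monotonicity (ii) finishes the job. Next fix any atlas $\{(U_i,\p_i)\}_{i\in I}$ of $M_2$ and any index $i$. For each $x\in M_1\cap U_i$ pick an admissible chart $(V_x,\psi_x)$ of $M_2$ with $x\in V_x\subset U_i$ and $\psi_x(M_1\cap V_x)=\psi_x(V_x)\cap(\R^{m_1}\times\{0\}^{m_2-m_1})$. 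Lindel\"of's theorem then extracts a countable subcover $\{V_{x_k}\}_{k\in\N}$ of $M_1\cap U_i$. For each $k$, $\psi_{x_k}(M_1\cap V_{x_k})\subset\R^{m_1}\times\{0\}^{m_2-m_1}$ lies in $\I_{m_2}$ by monotonicity, while $V_{x_k}\subset U_i$ makes $\p_i\circ\psi_{x_k}^{-1}$ a $\C^\infty$-diffeomorphism on $\psi_{x_k}(V_{x_k})$, so (H$_1$) transports negligibility to give $\p_i(M_1\cap V_{x_k})\in\I_{m_2}$. A countable union via (iii) yields $\p_i(M_1\cap U_i)\in\I_{m_2}$, and hence $M_1\in\I_{M_2}$.

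For part (b), split into the same two cases. When $m_1<m_2$, part (a) says $M_1\in\I_{M_2}$, and since every $A\in\I_{M_1}$ is a subset of $M_1$, monotonicity (ii) immediately forces $A\in\I_{M_2}$. When $m_1=m_2$, the adapted-chart relation at any $x\in M_1$ collapses to $\p(M_1\cap U)=\p(U)$, so $M_1\cap U=U$ and thus every point of $M_1$ has an $M_2$-neighborhood contained in $M_1$; this shows $M_1$ is open in $M_2$. Fixing any atlas $\{(V_j,\psi_j)\}_{j\in J}$ of $M_2$, each pair $(V_j\cap M_1,\psi_j|_{V_j\cap M_1})$ is then an admissible chart of $M_1$ (its image is open in $\R^{m_1}$ precisely because $M_1$ is open in $M_2$). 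Atlas-independence of $\I_{M_1}$ ensures that any $A\in\I_{M_1}$ satisfies $\psi_j(A\cap V_j\cap M_1)\in\I_{m_1}=\I_{m_2}$; but $A\subset M_1$ gives $\psi_j(A\cap V_j)=\psi_j(A\cap V_j\cap M_1)$, whence $A\in\I_{M_2}$.
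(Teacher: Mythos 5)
Your proof is correct and follows essentially the same route as the paper's: part (a) rests on $\R^{m_1}\times\{0\}\in\I_{m_2}$ via (H$_0$) and (H$_3$) together with adapted charts, and part (b) splits into the reduction to (a) and the observation that for $m_1=m_2$ the ideals of the charts coincide. The only difference is one of bookkeeping: you transport countably many adapted charts into a fixed chart $U_i$ via Lindel\"of and (H$_1$), whereas the paper simply invokes atlas-independence of $\I_{M_2}$ to work with an adapted atlas outright; both are sound.
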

\begin{proof}
(a) By the submanifold property, we may choose an atlas $\mathcal{A}$ of $M_2$ such that $\p(M_1\cap U)=\p(U)\cap(\R^{m_1}\times\{0\})$ for each $(U,\p)\in\mathcal{A}$. Since (H$_0$) and (H$_3$) imply $\R^{m_1}\times\{0\}\in\I_{m_2}$, we get $\p(M_1\cap U)\in\I_{m_2}$, as desired.

(b) The case $m_1<m_2$ reduces to assertion (a). If $m_1=m_2$, then for every admissible chart of $M_2$ we have $\p(A\cap U)\in\I_{m_1}=\I_{m_2}$.
\end{proof}

We can prove the following strengthening of condition (H$_1$).
\begin{lemma}\label{111}
If $\Phi\colon M_1\to M_2$ is a $\mathcal{C}^\infty$-diffeomorphism between manifolds $M_1\subset\R^{n_1}$, $M_2\subset\R^{n_2}$, then for every $A\in\I_{M_1}$ we have $\Phi(A)\in\I_{M_2}$.
\end{lemma}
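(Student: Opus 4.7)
The plan is to unfold the definition of $\I_{M_2}$ and reduce to condition (H$_1$) chart-by-chart, exactly in the spirit of the well-definedness argument for $\I_M$ already given in the excerpt. The key point is that since $\Phi$ is a diffeomorphism, $M_1$ and $M_2$ have a common dimension $m$, so both ideals are built from the same ideal $\I_m$.

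First I would fix an admissible atlas $\{(V_j,\psi_j)\}_{j\in J}$ of $M_2$ and an admissible atlas $\{(U_i,\p_i)\}_{i\in I}$ of $M_1$. To conclude $\Phi(A)\in\I_{M_2}$, I must show $\psi_j(\Phi(A)\cap V_j)\in\I_m$ for each $j\in J$. Fix such a $j$. Since $\Phi$ is a homeomorphism, $\Phi^{-1}(V_j)$ is open in $M_1$, and the family $\{U_i\cap\Phi^{-1}(V_j)\}_{i\in I}$ covers it. By Lindel\"of's theorem, pick a countable subset $I_0\subset I$ such that $\Phi^{-1}(V_j)\subset\bigcup_{i\in I_0}U_i$; since $\Phi$ is a bijection onto $M_2$, this gives $V_j\subset\bigcup_{i\in I_0}\Phi(U_i)$.

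Next, for each $i\in I_0$ consider the open set $W_i:=\p_i(U_i\cap\Phi^{-1}(V_j))\subset\R^m$ and the map
\[
\chi_i:=\psi_j\circ\Phi\circ\p_i^{-1}\colon W_i\longrightarrow\psi_j(V_j\cap\Phi(U_i))\subset\R^m.
\]
This is a composition of $\mathcal{C}^\infty$-diffeomorphisms between open subsets of $\R^m$, hence itself a $\mathcal{C}^\infty$-diffeomorphism defined on the open set $W_i\subset\R^m$. Since $A\in\I_{M_1}$, we have $B_i:=\p_i(A\cap U_i)\in\I_m$, and therefore $B_i\cap W_i=\p_i(A\cap U_i\cap\Phi^{-1}(V_j))\in\I_m$ by property (ii) of a $\sigma$-ideal. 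Applying condition (H$_1$) to $\chi_i$ and the set $B_i\cap W_i$ yields
\[
\chi_i(B_i\cap W_i)=\psi_j\bigl(\Phi(A)\cap V_j\cap\Phi(U_i)\bigr)\in\I_m.
\]

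Finally, since $V_j\subset\bigcup_{i\in I_0}\Phi(U_i)$, we get
\[
\psi_j(\Phi(A)\cap V_j)=\bigcup_{i\in I_0}\psi_j\bigl(\Phi(A)\cap V_j\cap\Phi(U_i)\bigr),
\]
which is a countable union of members of $\I_m$, hence in $\I_m$ by property (iii). As $j\in J$ was arbitrary, $\Phi(A)\in\I_{M_2}$. There is no real obstacle; the only step that requires minor care is producing the countable subcover so that the $\sigma$-additivity of $\I_m$ may be invoked, and verifying that $\chi_i$ is a genuine diffeomorphism between \emph{open} subsets of $\R^m$, which is automatic from the chart conditions.
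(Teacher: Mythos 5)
Your proof is correct and follows essentially the same route as the paper: reduce to a single chart $(V_j,\psi_j)$ of $M_2$, use Lindel\"of to extract a countable family of charts of $M_1$, and apply (H$_1$) to the transition maps $\chi_i=\psi_j\circ\Phi\circ\p_i^{-1}$ together with the $\sigma$-ideal properties. The only (immaterial) difference is that the paper takes a countable subcover of $A$ and settles for an inclusion into a countable union, whereas you cover $\Phi^{-1}(V_j)$ and get an equality.
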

\begin{proof}
Let $\mathcal{A}_1=\{(U_i,\p_i)\}_{i\in I}$ and $\mathcal{A}_2=\{(V_j,\psi_j)\}_{j\in J}$ be atlases generating the differentiable structures of $M_1$ and $M_2$, respectively. Let also $m$ be the dimension of $M_1$ and $M_2$. Fix $j\in J$; we are to prove that $\psi_j(\Phi(A)\cap V_j)\in\I_m$. Choose a countable set $I_0\subset I$ with $A\subset\bigcup_{i\in I_0}U_i$ and for each $i\in I_0$ define a $\mathcal{C}^\infty$-diffeomorphism $\chi_i=\psi_j\circ\Phi\circ\p_i^{-1}$. Then
\begin{equation}\label{iii}
\psi_j(\Phi(A)\cap V_j)\subset\bigcup_{i\in I_0}\chi_i(\p_i(A\cap U_i)\cap {\rm Dom\/}(\chi_i)),
\end{equation}
where ${\rm Dom\/}(\chi_i)$ stands for the domain of $\chi_i$. Moreover, since $A\in\I_{M_1}$, we have $\p_i(A\cap U_i)\in\I_m$ thus (H$_1$) implies that the both sets in \eqref{iii} belong to $\I_m$.
\end{proof}

Conditions (H$_1$), (H$_2$) imply a general version of Fubini's theorem.
\begin{lemma}\label{222}
Let $M_1\subset\R^{n_1}$, $M_2\subset\R^{n_2}$ be manifolds. If $A\in\I_{M_1\times M_2}$, then $$\{x\in M_1:\, A[x]\not\in\I_{M_2}\}\in\I_{M_1}.$$
\end{lemma}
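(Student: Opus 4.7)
The plan is to reduce to the coordinate-Fubini condition (H$_2$) by passing to countable subatlases and exploiting the $\sigma$-ideal property to absorb the resulting unions. Since $M_1$ and $M_2$ are second countable (as subspaces of Euclidean spaces), Lindel\"of's theorem yields countable subatlases $\{(U_i,\p_i)\}_{i\in I_0}$ of $M_1$ and $\{(V_j,\psi_j)\}_{j\in J_0}$ of $M_2$. Then $\{(U_i\times V_j,\p_i\times\psi_j)\}_{(i,j)\in I_0\times J_0}$ is a countable atlas of $M_1\times M_2$, and by the atlas-independence of \eqref{def_IM} (established in the discussion following that equation), it suffices to verify all ideal memberships against these countable atlases. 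In particular, the hypothesis $A\in\I_{M_1\times M_2}$ translates into
\[
C_{ij}:=(\p_i\times\psi_j)\bigl(A\cap(U_i\times V_j)\bigr)\in\I_{m_1+m_2}\qquad\text{for all }i\in I_0,\ j\in J_0.
\]

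Next I will decompose the target set as
\[
B:=\{x\in M_1:\,A[x]\notin\I_{M_2}\}=\bigcup_{j\in J_0}B_j,\qquad B_j:=\{x\in M_1:\,\psi_j(A[x]\cap V_j)\notin\I_{m_2}\},
\]
which is legitimate because, by atlas-independence applied to $M_2$, we have $A[x]\in\I_{M_2}$ iff $\psi_j(A[x]\cap V_j)\in\I_{m_2}$ for every $j\in J_0$. By the $\sigma$-ideal property of $\I_{M_1}$, it then suffices to prove $B_j\in\I_{M_1}$ for each $j\in J_0$; that is, $\p_i(B_j\cap U_i)\in\I_{m_1}$ for every $i\in I_0$.

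Fix $i,j$. A direct unwinding of the product-chart map gives, for $u\in\p_i(U_i)$, the identification $C_{ij}[u]=\psi_j\bigl(A[\p_i^{-1}(u)]\cap V_j\bigr)$, while $C_{ij}[u]=\emptyset$ for $u\notin\p_i(U_i)$. Consequently,
\[
\{u\in\R^{m_1}:\,C_{ij}[u]\notin\I_{m_2}\}=\p_i(B_j\cap U_i),
\]
and condition (H$_2$) applied to $C_{ij}\in\I_{m_1+m_2}$ places the left-hand set in $\I_{m_1}$, as required. The only potential obstacle is ensuring that countably many conditions (ranging over $i\in I_0$ and $j\in J_0$) are enough to characterize membership in $\I_{M_1}$ and $\I_{M_2}$; this is precisely what the atlas-independence of \eqref{def_IM} furnishes, so no input beyond (H$_2$) and the $\sigma$-ideal axioms is needed.
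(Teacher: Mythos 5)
Your proof is correct and follows essentially the same route as the paper's: both pass to countable atlases, form the product-chart images $(\p_i\times\psi_j)\bigl(A\cap(U_i\times V_j)\bigr)$, identify their sections with $\psi_j(A[x]\cap V_j)$, and reduce everything to (H$_2$) plus the $\sigma$-ideal axioms. The only difference is organizational: you argue directly, writing the exceptional set as a countable union of pieces each handled by (H$_2$), whereas the paper runs the identical computation as a proof by contradiction, isolating a single bad pair of charts $(i_0,j_0)$.
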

\begin{proof}
Let $\{(U_i,\p_i)\}_{i\in I}$ and $\{(V_j,\psi_j)\}_{j\in J}$ be arbitrary countable atlases generating the differentiable structures of $M_1$ and $M_2$, respectively. Since $A\in\I_{M_1\times M_2}$, for each $i\in I$, $j\in J$ we have $$B_{ij}:=(\p_i\times\psi_j)(A\cap (U_i\times V_j))\in\I_{m_1+m_2}.$$Moreover, $$B_{ij}=\{(\p_i(x),\psi_j(y))\in\R^{m_1+m_2}:\, x\in U_i\mbox{ and }y\in A[x]\cap V_j\}$$for $i\in I$, $j\in J$. Suppose, in search of a contradiction, that $$Z:=\{x\in M_1:\, A[x]\not\in\I_{M_2}\}\not\in\I_{M_1}.$$Then we may find $i_0\in I$ with $Z\cap U_{i_0}\not\in\I_{M_1}$. If for every $j\in J$ the set $$C_j:=\{x\in Z\cap U_{i_0}:\, A[x]\cap V_j\not\in\I_{M_2}\}$$belonged to $\I_{M_1}$, then we would have $$Z\cap U_{i_0}=\{x\in Z\cap U_{i_0}: A[x]\not\in\I_{M_2}\}=\bigcup_{j\in J}C_j\in\I_{M_1},$$which is not the case. Therefore, we may find $j_0\in J$ with $C_{j_0}\not\in\I_{M_1}$. Define $$B=\{(\p_{i_0}(x),\psi_{j_0}(y))\in\R^{m_1+m_2}:\, x\in Z\cap U_{i_0}\mbox{ and }y\in A[x]\cap V_{j_0}\}$$and note that $B\subset B_{i_0,j_0}$, whence $B\in\I_{m_1+m_2}$. However, $\p_{i_0}(C_{j_0})\not\in\I_{m_1}$ and for each $x\in C_{j_0}$ and $t=\p_{i_0}(x)$ we have $$B[t]=\psi_{j_0}(A[x]\cap V_{j_0})\not\in\I_{m_2}.$$This yields a contradiction with (H$_2$).
\end{proof}

\begin{lemma}\label{submersion2}
If $\Phi\colon M_1\to M_2$ is a $\mathcal{C}^\infty$-submersion between manifolds $M_1\subset\R^{n_1}$, $M_2\subset\R^{n_2}$, then for every $A\subset M_1$, $A\not\in\I_{M_1}$ we have $\Phi(A)\not\in\I_{M_2}$.
\end{lemma}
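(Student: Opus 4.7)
I argue the contrapositive: assuming $\Phi(A)\in\I_{M_2}$, I show that $A\in\I_{M_1}$. Let $m_i=\dim M_i$; the definition of submersion forces $m_1\geq m_2$. The strategy is to reduce everything to the local canonical form of a submersion and then to derive the conclusion from (H$_1$) and (H$_3$) alone; Lemma~\ref{222} is not needed here because projections, not general fibrations, suffice after straightening.

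First I invoke the local submersion (canonical form) theorem: for every $x\in M_1$ there exist admissible charts $(U,\p)$ of $M_1$ with $x\in U$ and $(V,\psi)$ of $M_2$ with $\Phi(U)\subset V$ such that $\psi\circ\Phi\circ\p^{-1}\colon \p(U)\to\psi(V)$ coincides with the standard projection $\pi\colon\R^{m_1}=\R^{m_2}\times\R^{m_1-m_2}\to\R^{m_2}$. Since $M_1\subset\R^{n_1}$ is second countable, I can cover $M_1$ by countably many such charts $(U_i,\p_i)$, with corresponding charts $(V_i,\psi_i)$ of $M_2$. Because $\I_{M_1}$ is a $\sigma$-ideal, proving $A\in\I_{M_1}$ reduces to showing $A\cap U_i\in\I_{M_1}$ for each $i$, and by definition \eqref{def_IM} this in turn reduces to showing $\p_i(A\cap U_i)\in\I_{m_1}$.

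Fix one such $i$. From $\Phi(A)\in\I_{M_2}$ and the definition of $\I_{M_2}$ I get $\psi_i(\Phi(A)\cap V_i)\in\I_{m_2}$. Since $\Phi(A\cap U_i)\subset\Phi(A)\cap V_i$, and since in the chosen local coordinates $\psi_i\circ\Phi\circ\p_i^{-1}=\pi$, it follows that
\[
\pi\bigl(\p_i(A\cap U_i)\bigr)=\psi_i\bigl(\Phi(A\cap U_i)\bigr)\in\I_{m_2}.
\]
Denote $E=\p_i(A\cap U_i)$ and $k=m_1-m_2$. Then $E\subset\pi(E)\times\R^k$. By (H$_3$) we have $\R^k\times\pi(E)\in\I_{m_1}$, and since the coordinate swap $(y,x)\mapsto(x,y)$ on $\R^{m_1}$ is a $\C^\infty$-diffeomorphism, (H$_1$) yields $\pi(E)\times\R^k\in\I_{m_1}$. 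Hence $E\in\I_{m_1}$, as required.

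The main obstacle is not any single step but the first one: justifying that the chart pairs can be arranged so that $\Phi$ becomes the standard projection in local coordinates. This is the local submersion theorem (see, e.g., Tu \cite[Sec.~11]{tu} or Abraham--Marsden--Ratiu); once it is in hand, the rest is a bookkeeping argument using only the $\sigma$-ideal properties (ii)--(iii) of $\I_{m_1}$ together with (H$_1$) and (H$_3$), with no Fubini-type input needed.
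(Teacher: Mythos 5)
Your argument is correct and is essentially the paper's proof read in the contrapositive: the paper likewise straightens the submersion locally (building the canonical-form chart by hand from a nonvanishing $m_2\times m_2$ minor of the Jacobian rather than citing the rank theorem), uses the inclusion of the straightened image of $A$ in $\psi(\Phi(A\cap U))\times\R^{m_1-m_2}$, and closes with (H$_1$) and (H$_3$). The only cosmetic difference is that the paper argues directly, using Lindel\"of's theorem to locate a point near which $A$ is non-negligible in every neighborhood, whereas you take a countable cover of canonical charts and invoke the $\sigma$-ideal property; both routes rest on the same local normal form and the same axioms.
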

\begin{proof}
By Lindel\"of's theorem, there is a point $x_0\in M_1$ such that for every its neighborhood $U\subset M_1$ we have $A\cap U\not\in\I_{M_1}$. By the assumption, we may find admissible charts $(U,\p)$ and $(V,\psi)$ of $M_1$ and $M_2$, respectively, such that $x_0\in U$, $\Phi(x_0)\in V$, $\p(A\cap U)\not\in\I_{m_1}$ and the derivative of $\psi\circ\Phi\circ\p^{-1}$ at any point of $\p(U)$ is a surjection from $\R^{m_1}$ onto $\R^{m_2}$ ($m_1$, $m_2$ being the dimensions of $M_1$, $M_2$, respectively). Hence, obviously, $m_1\geq m_2$ and there is a sequence $1\leq i_1<\ldots<i_{m_2}\leq m_1$ such that $${\partial(\psi\circ\Phi\circ\p^{-1})\over\partial y_{i_1}\ldots\partial y_{i_{m_2}}}(\p(x_0))\not=0.$$By decreasing the neighborhood $U$, we may guarantee that the above condition holds true for every $x\in U$ in the place of $x_0$, and that the mapping $\psi\circ\Phi\circ\p^{-1}$ is defined on the whole $\p(U)$. Let $\psi\circ\Phi\circ\p^{-1}=(G_1,\ldots ,G_{m_2})$ and define a function $F=(F_1,\ldots ,F_{m_1})\colon\p(U)\to\R^{m_1}$ by the formula $$F_k(y)=\left\{\begin{array}{cl}G_j(y) & \mbox{if }k=i_j\mbox{ for some }j\in\{ 1,\ldots ,m_2\},\\
y_k & \mbox{otherwise.}\end{array}\right.$$Then for each $y\in\p(U)$ we have $$\left|{\partial F\over\partial y_1\ldots\partial y_{m_1}}(y)\right|=\left|{\partial(\psi\circ\Phi\circ\p^{-1})\over\partial y_{i_1}\ldots\partial y_{i_{m_2}}}(y)\right|\not=0,$$thus, decreasing $U$ as required, we may assume that $F$ is a $\mathcal{C}^\infty$-diffeomorphism. Enumerating the coordinates we may also modify $F$ in such a way that it is still a $\mathcal{C}^\infty$-diffeomorphism and
\begin{equation}\label{FpA}
F(\p(A\cap U))\subset (\psi\circ\Phi\circ\p^{-1})(\p(A\cap U))\times\R^{m_1-m_2} .
\end{equation}
In view of $\p(A\cap U)\not\in\I_{m_1}$, condition (H$_1$) yields $F(\p(A\cap U))\not\in\I_{m_1}$, whence \eqref{FpA} and (H$_3$) imply $\psi(\Phi(A\cap U))\not\in\I_{m_2}$. Therefore, $\Phi(A\cap U)\not\in\I_{M_2}$, since $\psi$ is an admissible chart of $M_2$ defined on $\Phi(U)$.
\end{proof}

In a similar manner we obtain the next lemma.
\begin{lemma}\label{immersion}
If $\Phi\colon M_1\to M_2$ is a $\mathcal{C}^\infty$-immersion between manifolds $M_1\subset\R^{n_1}$, $M_2\subset\R^{n_2}$, then for every $A\in\I_{M_1}$ we have $\Phi(A)\in\I_{M_2}$.
\end{lemma}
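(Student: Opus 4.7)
The plan is to mimic the proof of Lemma \ref{submersion2}, but with the local normal form appropriate for immersions: instead of stripping away coordinates on the domain, we add auxiliary coordinates on the codomain. Fix an atlas $\{(V_j,\psi_j)\}_{j\in J}$ of $M_2$; it suffices to show that $\psi_j(\Phi(A)\cap V_j)\in\I_{m_2}$ for every $j\in J$. Lindel\"of's theorem applied to the open set $\Phi^{-1}(V_j)\subset M_1$ allows us to select a countable family of admissible charts $(U_i,\p_i)$ of $M_1$ which cover $A\cap\Phi^{-1}(V_j)$, satisfy $\Phi(U_i)\subset V_j$, and are such that the coordinate representation $G_i:=\psi_j\circ\Phi\circ\p_i^{-1}$ has everywhere injective derivative on $\p_i(U_i)$. (Compatibility of the fixed target chart $(V_j,\psi_j)$ with the charts supplied by the definition of immersion is arranged by restricting the latter to intersections with $V_j$ and composing with a smooth transition map, which preserves injectivity of the derivative.)

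Next, for each $i$ I would construct an auxiliary $\mathcal{C}^\infty$-diffeomorphism that extends $G_i$. Given $y_0\in\p_i(U_i)$, choose indices $1\leq l_1<\ldots<l_{m_1}\leq m_2$ for which the rows of $DG_i(y_0)$ indexed by $l_1,\ldots,l_{m_1}$ are linearly independent, and let $\{k_1,\ldots,k_{m_2-m_1}\}$ be the complementary indices. Define $F_i\colon\p_i(U_i)\times\R^{m_2-m_1}\to\R^{m_2}$ by
$$F_i(y,z)_{l_\alpha}=(G_i)_{l_\alpha}(y),\qquad F_i(y,z)_{k_\beta}=(G_i)_{k_\beta}(y)+z_\beta.$$
After permuting rows, the Jacobian of $F_i$ at $(y_0,0)$ becomes block lower-triangular with the chosen invertible $m_1\times m_1$ minor and $I_{m_2-m_1}$ on the diagonal, hence nonsingular. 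The inverse function theorem, combined with a shrinking of $U_i$ and refinement of the countable cover, then allows us to assume $F_i$ is a $\mathcal{C}^\infty$-diffeomorphism on some open product $\p_i(U_i)\times W_i$ with $W_i\ni 0$. Crucially, $F_i(y,0)=G_i(y)$.

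To conclude, I would bring in the hypotheses (H$_0$), (H$_1$), (H$_3$). Iterating (H$_0$) and (H$_3$) as in the proof of Lemma \ref{dim}(a) gives $\R^{m_1}\times\{0\}\in\I_{m_2}$, and therefore $\p_i(A\cap U_i)\times\{0\}\in\I_{m_2}$ by the ideal property. Since this set lies inside the domain of the diffeomorphism $F_i$, condition (H$_1$) yields
$$\psi_j(\Phi(A\cap U_i))=F_i\bigl(\p_i(A\cap U_i)\times\{0\}\bigr)\in\I_{m_2},$$
and taking the countable union over $i$ delivers $\psi_j(\Phi(A)\cap V_j)\in\I_{m_2}$.

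The main technical step is the construction and verification of $F_i$—the classical local normal form for an immersion, adapted exactly in the spirit of the auxiliary map in Lemma \ref{submersion2}. Once $F_i$ is in place, the rest is straightforward bookkeeping using the Fubini-type axiom (H$_3$) and the diffeomorphism-invariance axiom (H$_1$).
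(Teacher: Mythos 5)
Your proof is correct and is exactly the argument the paper intends: the paper omits the details, remarking only that the lemma follows \lq\lq in a similar manner\rq\rq\ to Lemma \ref{submersion2}, and your construction of the local diffeomorphism $F_i$ extending $\psi_j\circ\Phi\circ\p_i^{-1}$ by auxiliary coordinates is the precise dual of the auxiliary map $F$ built there. The concluding step---placing $\p_i(A\cap U_i)\times\{0\}$ inside $\R^{m_1}\times\{0\}\in\I_{m_2}$ via (H$_0$) and (H$_3$) and then applying (H$_1$)---is sound, and the countable-cover bookkeeping closes the argument with no gaps.
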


From now on, let $n\geq 2$ be a fixed natural number and $\langle\cdot\vert\cdot\rangle$ be an arbitrary inner product in $\R^n$ inducing a norm which we denote by $\|\cdot\|$. For any set $A$ we define $A^\ast=A\setminus\{ 0\}$, where the meaning of $0$ is clear from the context. Let $\bot$ be the set of all pairs of orthogonal vectors from $\R^n$. Then $\bot^\ast=F^{-1}(0)$, where $F\colon(\R^n\times\R^n)^\ast\to\R$ is given by $F(x,y)=\langle x\vert y\rangle$. Since $0$ is a regular value of $F$, it follows from \cite[Theorem 9.11]{tu} that $\bot^\ast$ forms a $(2n-1)$-manifold (being also a regular submanifold of $(\R^n\times\R^n)^\ast$).

We may therefore make it precise what being \lq\lq negligible\rq\rq\ in $\bot$ means. Namely, we say that a set $Z\subset\bot$ has this property iff $Z\in\I_{\bot^\ast}$ and we will then write simply $Z\in\I_\bot$. We are now ready to formulate our main result which we shall prove in the last section. For notational convenience, if $M$ is a manifold and some property, depending on a variable $x$, holds true for all $x\in M\setminus A$ with $A\in\I_M$, then we write that it holds $\I_M$-(a.e.).
\begin{theorem}
Let $(G,+)$ be an Abelian group. If a function $f\colon\R^n\to G$ satisfies $f(x+y)=f(x)+f(y)$ $\I_\bot${\rm -(a.e.)\/},
then there is a unique orthogonally additive function $g\colon\R^n\to G$ such that $f(x)=g(x)$ $\I_n${\rm -(a.e.)\/}.
\end{theorem}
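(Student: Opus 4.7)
My plan is to adapt the classical de~Bruijn--Jurkat argument to the orthogonality constraint, by defining $g(x) := f(x+u)-f(u)$ for a generic auxiliary vector $u\in x^\perp$. First, the swap $\sigma(x,y)=(y,x)$ is a $\C^\infty$-diffeomorphism of $\bot^\ast$, so Lemma~\ref{111} lets me replace $Z$ by $Z\cup\sigma(Z)$ and assume $Z$ is symmetric. The manifold $\bot^\ast$ is a smooth rank-$(n-1)$ vector bundle over $(\R^n)^\ast$ via the submersion $(x,y)\mapsto x$: locally, a smooth orthonormal frame $e_1(x),\dots,e_{n-1}(x)$ for $x^\perp$ on a neighbourhood $U\subset(\R^n)^\ast$ yields a $\C^\infty$-diffeomorphism $U\times\R^{n-1}\to\bot^\ast$, $(x,t)\mapsto(x,\sum_i t_i e_i(x))$. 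Covering $(\R^n)^\ast$ by countably many such $U$ (Lindel\"of) and applying Lemmas~\ref{111} and~\ref{222} in each chart, I would obtain
\[
X:=\{x\in(\R^n)^\ast:Z_x\notin\I_{x^\perp\setminus\{0\}}\}\in\I_n,\qquad Z_x:=\{y\in x^\perp:(x,y)\in Z\};
\]
a similar argument applied to the $\C^\infty$-submersion $(x_1,x_2)\mapsto x_1+x_2$ on $\bot^\ast$ yields the further slice information needed below.

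Next I define $g$. For $x\notin X$, any $u\in x^\perp\setminus Z_x$ has $(x,u)\notin Z$, so $f(x+u)-f(u)=f(x)$ and I set $g(x):=f(x)$. For $x\in X$ (a $\I_n$-negligible set) the direct formula may fail, so here I pick an orthogonal decomposition $x=x_1+x_2$ with $x_1,x_2\notin X$ and $(x_1,x_2)\notin Z$ (guaranteed by the preceding Fubini analysis) and set $g(x):=f(x_1)+f(x_2)$. Well-definedness follows from a standard chaining argument: any two admissible decompositions are interpolated via a third generic one, using Lemma~\ref{222} to see the \emph{compatibility} set is co-negligible. By construction $g=f$ $\I_n$-a.e.

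To check orthogonal additivity of $g$ at $(x,y)\in\bot$ with $n\geq 3$, I would choose $u\in\{x,y\}^\perp$ (an $(n-2)$-dimensional subspace, nontrivial when $n\geq 3$) generic enough that $(x,u)$, $(y,u)$, $(x+y,u)$, $(x,y+u)$, $(y,x+u)$ all lie outside $Z$ and the intermediate vectors lie outside $X$; then
\[
g(x+y)=f(x+y+u)-f(u)=f(x)+f(y+u)-f(u)=f(x)+f(y)=g(x)+g(y),
\]
each equality being an instance of the hypothesis. When $n=2$, $\{x,y\}^\perp=\{0\}$ for nonzero orthogonal $x,y$, so a two-step auxiliary pair $(u,v)$ with $u\in x^\perp$, $v\in(y+u)^\perp$ and a longer chain of orthogonal additivities on $\bot\setminus Z$ is needed. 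Uniqueness follows from the Baron--R\"atz representation~\eqref{1}: a difference of orthogonally additive functions that vanishes $\I_n$-a.e.\ has the form $a(\|x\|^2)+b(x)$ with $a,b$ additive, forcing $a\equiv 0$ and $b\equiv 0$.

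The hardest part is ensuring that for \emph{every} individual $(x,y)\in\bot$---not merely for $\I_\bot$-a.e.\ $(x,y)$---the set of admissible auxiliary vectors is non-empty. This requires iterated Fubini arguments on nested slices of $\bot^\ast$ via Lemmas~\ref{submersion2},~\ref{immersion}, and~\ref{222}, together with a separate treatment of the case $n=2$ where the natural $(n-2)$-dimensional auxiliary space degenerates.
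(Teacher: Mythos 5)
Your plan founders on exactly the point you flag as ``the hardest part,'' and the obstruction is not a technical nuisance but a structural failure of the method. To verify $g(x+y)=g(x)+g(y)$ at a \emph{fixed} pair $(x,y)\in\bot$ you need an auxiliary $u$ ranging over the $(n-2)$-dimensional subspace $\{x,y\}^\perp$ such that $(y,u)$, $(x+y,u)$, $(x,y+u)$ all avoid $Z$. But the hypothesis controls $Z$ only as a subset of the $(2n-1)$-manifold $\bot^\ast$, and the set of orthogonal pairs whose second coordinate lies in $\{x,y\}^\perp$ is contained in a finite union of submanifolds of dimension at most $2n-3$, hence belongs to $\I_\bot$ by Lemma \ref{dim}. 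Concretely, $Z=\{(s,t)\in\bot^\ast:\ 0\neq t\in\{x,y\}^\perp\}$ is a legitimate exceptional set, and for this $Z$ \emph{every} nonzero $u\in\{x,y\}^\perp$ violates the condition $(y,u)\notin Z$; no iterated Fubini argument on slices can recover information that the hypothesis simply does not contain about this negligible stratum. The same degeneracy afflicts your definition of $g$ on the exceptional set $X$: the admissible first summands $x_1$ of an orthogonal decomposition of a fixed $x$ form the sphere $\|x_1-x/2\|=\|x\|/2$, which is itself negligible in $\R^n$ and may be entirely contained in $X$, so the existence (let alone well-definedness) of an admissible decomposition for \emph{every} $x\in X$ does not follow from the a.e.\ statements your Fubini analysis yields.

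The paper avoids pointwise verification altogether. It splits $f$ into odd and even parts (Lemma \ref{L0}); for the odd part it produces an exactly additive $b\colon\R^n\to G$ by induction on $n$, built on the parametrization $(\lambda,y)\mapsto\bigl(\lambda x+y,\|y\|^2\lambda^{-1}x-y\bigr)$ of orthogonal pairs lying in a fixed two-plane, together with de Bruijn's theorem on the line and Lemma \ref{sfera}; for the even part it shows the function is a.e.\ constant on a.e.\ sphere via $(t,u)\mapsto(t+u,u-t)$ and again invokes de Bruijn, this time on $(0,\infty)$; a final lemma disposes of the resulting $2$-torsion. The corrected function is then $g(x)=a\left(\|x\|^2\right)+b(x)$ with $a,b$ exactly additive, which is orthogonally additive by construction --- no pair-by-pair check is ever performed. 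To salvage a correction-formula approach you would first have to establish the global form \eqref{1} almost everywhere, which is essentially what the paper's proof does.
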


\begin{remark}\label{Rem2}
According to Remark \ref{Rem1}, the above theorem works whenever the ideal $\I_\bot$ is defined via formula \eqref{def_IM} for $(\I_m)_{m=1}^\infty$ being one of the sequences of ideals described in (a)-(d). 

In case (a) the ideal $\I_\bot$ consists of all first category subsets of $\bot^\ast$, regarded as a~topological subspace of the Euclidean space $\R^{2n}$. 

In case (b) the ideal $\I_\bot$ consists of all Lebesgue measure zero subsets of $\bot^\ast$. Recall that the Lebesgue measure on any regular submanifold $M$ of $\R^n$ is defined with the aid of the formula $$\mu_M(A)=\int_{\p(A)}\bigl|(\p^{-1})^\prime(\boldsymbol{x})\bigr|\,\mathrm{d}\boldsymbol{x},$$postulated for any admissible chart $(U_\p,\p)$ of $M$ and any set $A\subset M$ such that $A\subset U_\p$ and $\p(A)\subset\R^m$ is Lebesgue measurable. 

Further examples are produced by the ideals $\I_m$ described in (c)-(d), in Remark \ref{Rem1}. For instance, one may start with the $\alpha$-dimensional Hausdorff measure $\mathscr{H}^\alpha$ (for some $0<\alpha<1$) defined on all Borel subsets (or on all Hausdorff measurable subsets) of $\R$ and, by using formula \eqref{def_IM}, induce a~corresponding ideal $\I_\bot$. However, this ideal will not be the same as the ideal of all Borel (Hausdorff measurable) sets $A\subset\bot^\ast$ with $\mathscr{H}^{\alpha(2n-1)}(A)=0$ (the $\alpha(2n-1)$-dimensional Hausdorff measure on the metric space $\bot^\ast$), since the product measure $\mathscr{H}^\alpha\otimes\mathscr{H}^\alpha$ need not be the Hausdorff measure $\mathscr{H}^{2\alpha}$ (consult \cite[\S 3.1]{rogers} and the references therein). This leads to the following question: Let $0<\alpha<1$. Is our Theorem true in the case where $\I_\bot$ is the set of all Borel (Hausdorff measurable) sets $A\subset\bot^\ast$ with $\mathscr{H}^{\alpha(2n-1)}(A)=0$ and $\I_n$ is replaced by the ideal of all Borel (Hausdorff measurable) sets $B\subset\R^n$ with $\mathscr{H}^{\alpha n}(B)=0$?
\end{remark}

Before proceeding to further lemmas, let us note some preparatory observations. For any $x\in\R^n$ define $$P_x=\{y\in\R^n:\, (x,y)\in\bot\},$$ which obviously forms an $(n-1)$-manifold diffeomorphic to $\R^{n-1}$, provided $x\not=0$. We will need to \lq\lq smoothly\rq\rq\ identify the hyperplanes $P_x$, for different $x$'s, with one \lq\lq universal\rq\rq\ space $\R^{n-1}$. By virtue of the Hairy Sphere Theorem, it is impossible to do for all $x\in (\R^n)^\ast$ in the case where $n$ is odd. Nevertheless, it is an easy task when considering only the set of vectors for which one fixed coordinate is non-zero, e.g. the set $$X:=\R^{n-1}\times\R^\ast.$$

Namely, for an arbitrary $x\in X$ the vectors $x,e_1,\ldots ,e_{n-1}$ are linearly independent, where $e_i$ stands for the $i$th vector from the canonical basis of $\R^n$. Let $\mathcal{B}(x)=(y_i(x))_{i=0}^{n-1}$ be an orthonormal basis of $\R^n$ with $y_0(x)=x/\|x\|$, produced by the Gram-Schmidt process applied to the sequence $(x,e_1,\ldots ,e_{n-1})$. Define $\psi_x\colon\R^n\to\R^n$ to be the mapping which to every $z\in\R^n$ assigns its coordinates with respect to $\mathcal{B}(x)$, i.e. $\psi_x(z)=\boldsymbol{Y}(x)^{-1}z$, where $$\boldsymbol{Y}(x)=\left({x\over\|x\|},y_1(x),\ldots ,y_{n-1}(x)\right)$$is the matrix formed from the column vectors. Define also $\Phi\colon X\times\R^n\to X\times\R^n$ by $\Phi(x,z)=(x,\psi_x(z))$. Plainly, $\Phi$ is a $\mathcal{C}^\infty$-mapping and its inverse $\Phi^{-1}(x,y)=(x,Y(x)y)$ is $\mathcal{C}^\infty$ as well. Therefore, $\Phi$ is a $\mathcal{C}^\infty$-diffeomorphism. Moreover, by the definition of $\psi_x$, the restriction $\psi_x|_{P_x}$ maps $P_x$ onto $\{ 0\}\times\R^{n-1}$, hence we have
\begin{equation}\label{Pxx}
\Phi^{-1}\left(X\times(\{0\}\times\R^{n-1})\right)=\{ (x,z)\in\bot^\ast :\, x\in X\}=:\bot^\prime .
\end{equation}
Making use of \cite[Theorem 11.20]{tu} and an easy fact that the restriction of a $\mathcal{C}^\infty$ mapping to a submanifold of its domain is $\mathcal{C}^\infty$ again\footnote{In the sequel, we will be using these two assertions without explicit mentioning.}, we infer by \eqref{Pxx} that $\Phi|_{\bot^\prime}$ yields a $\mathcal{C}^\infty$-diffeomorphism between $\bot^\prime$ and $X\times (\{0\}\times\R^{n-1})$.

Consequently, if a function $h\colon\R^n\to G$ satisfies $h(x+y)=h(x)+h(y)$ $\I_\bot$-(a.e.), then with the notation $$Z(h):=\{ (x,y)\in\bot^\ast:\, h(x+y)\not=h(x)+h(y)\}$$it follows from Lemmas \ref{111} and \ref{222} that $$\left\{ x\in X:\,\{\psi_x(z):\, (x,z)\in Z(h)\}\not\in\I_{\{0\}\times\R^{n-1}}\right\}\in\I_X.$$Since $P_x\sim\{0\}\times\R^{n-1}$, by the mapping $\psi_x|_{P_x}$ for $x\in X$, we infer that the set $$D(h):=\{x\in X:\, h(x+y)=h(x)+h(y)\,\,\I_{P_x}\mbox{-(a.e.)}\}$$satisfies $X\setminus D(h)\in\I_X$. For any $x\in\R^n$ put $$E_x(h)=\{ y\in P_x:\, h(x+y)=h(x)+h(y)\} ;$$then $P_x\setminus E_x(h)\in\I_{P_x}$, provided $x\in D(h)$.

We end this section with a lemma, which will be useful in the \lq\lq odd\rq\rq\ part of the proof of our Theorem. Despite it will be applied only in the case $n=2$, we present it in full generality, since the lemma seems to be interesting independently on the problem considered. Let $S^{n-1}$ be the unit sphere of the normed space $(\R^n,\|\cdot\|)$. Since the function $F\colon\R^n\to\R$ given by $F(x)=\|x\|^2$ is $\mathcal{C}^\infty$ with the regular value $1$ and $S^{n-1}=F^{-1}(1)$, we infer that $S^{n-1}$ is an $(n-1)$-manifold.
\begin{lemma}\label{sfera}
If $A\in\I_{S^{n-1}}$, then there exists an orthogonal basis $(x_1,\ldots ,x_n)$ of $\R^n$ such that $x_i\in S^{n-1}\setminus A$ for each $i\in\{ 1,\ldots ,n\}$.
\end{lemma}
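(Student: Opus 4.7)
\emph{Proof plan.} I would proceed by induction on $n\geq 2$. For the base case $n=2$, let $R\colon S^1\to S^1$ be rotation by $\pi/2$; since $R$ and $R^{-1}$ are $\C^\infty$-diffeomorphisms of $S^1$, Lemma~\ref{111} gives $R(A),R^{-1}(A)\in\I_{S^1}$, so $A\cup R(A)\cup R^{-1}(A)$ is a member of the proper $\sigma$-ideal $\I_{S^1}$. Any $x_1\in S^1$ outside this union, together with $x_2:=R(x_1)$, yields the required orthogonal pair.

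For the inductive step, suppose the lemma holds in dimension $n-1\geq 2$ and let $A\in\I_{S^{n-1}}$. For $x\in S^{n-1}$ put $S_x:=x^\perp\cap S^{n-1}$, the unit sphere of the $(n-1)$-dimensional inner product space $x^\perp$. My plan is to show that
$$B:=\bigl\{x\in S^{n-1}:A\cap S_x\notin\I_{S_x}\bigr\}\in\I_{S^{n-1}}.$$
Once this is done, picking $x_1\in S^{n-1}\setminus(A\cup B)$, identifying $x_1^\perp$ isometrically with $\R^{n-1}$, and applying the induction hypothesis to $A\cap S_{x_1}\in\I_{S_{x_1}}$ produces an orthogonal basis $(x_2,\ldots,x_n)$ of $x_1^\perp$ inside $S_{x_1}\setminus A$, and $(x_1,\ldots,x_n)$ is the required basis of $\R^n$.

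To establish the claim about $B$, I would work on the Stiefel manifold $V:=\{(x,y)\in S^{n-1}\times S^{n-1}:\langle x\vert y\rangle=0\}$, a regular $(2n-3)$-dimensional submanifold of $S^{n-1}\times S^{n-1}$, together with its two projections $\pi_j\colon V\to S^{n-1}$, $j=1,2$. Mimicking the Gram--Schmidt construction of $\psi_x$ used in the preceding discussion, each $\pi_j$ carries local $\C^\infty$-trivializations: around every point of $S^{n-1}$ there is an open neighborhood $U$ and a $\C^\infty$-diffeomorphism $\tau\colon U\times S^{n-2}\to\pi_j^{-1}(U)$ commuting with $\pi_j$. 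Setting $\tilde A:=\pi_2^{-1}(A)$ and using a $\pi_2$-trivialization over $U$, the piece $\tilde A\cap\pi_2^{-1}(U)$ corresponds to $(A\cap U)\times S^{n-2}$; by (H$_1$), (H$_3$), and Lemma~\ref{111} this lies in $\I_V$, and Lindel\"of's theorem together with the $\sigma$-ideal property of $\I_V$ gives $\tilde A\in\I_V$.

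Finally, using a $\pi_1$-trivialization over $U'\subset S^{n-1}$, the set $\tilde A\cap\pi_1^{-1}(U')$ transfers via Lemma~\ref{111} to an element of $\I_{U'\times S^{n-2}}$; Lemma~\ref{222} then yields $\{x\in U':\tilde A[x]\notin\I_{S^{n-2}}\}\in\I_{U'}$. Because the $\pi_1$-trivialization supplies, for each $x\in U'$, a $\C^\infty$-diffeomorphism $S_x\sim S^{n-2}$, Lemma~\ref{111} identifies that set with $B\cap U'$; taking a countable Lindel\"of cover yields $B\in\I_{S^{n-1}}$. The main obstacle is the concrete construction of the two local trivializations of $V$ from Gram--Schmidt (one for each projection) together with careful bookkeeping that they intertwine the ideals $\I_V$, $\I_{U\times S^{n-2}}$, $\I_{S_x}$, and $\I_{S^{n-2}}$ as claimed; once that is in place, Lemmas~\ref{111}--\ref{222} and (H$_1$)--(H$_3$) do all the remaining work.
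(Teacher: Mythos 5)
Your argument is correct in outline, but it takes a genuinely different route from the paper. The paper proves the lemma in one stroke, without induction: it views the orthogonal group $\Or(n)$ as a submanifold of $\mathrm{GL}(n)$, checks that each column map $\pi_i(\boldsymbol{A})=\boldsymbol{A}e_i$ is a $\C^\infty$-submersion onto $S^{n-1}$ (via Lemma~\ref{submersion}), and then argues by contradiction: if every orthonormal basis met $A$, then $\Or(n)=\bigcup_{i=1}^n\pi_i^{-1}(A)$, so some $\pi_i^{-1}(A)\notin\I_{\Or(n)}$, and Lemma~\ref{submersion2} would force $A=\pi_i(\pi_i^{-1}(A))\notin\I_{S^{n-1}}$. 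You instead induct on $n$ and run a Fubini argument on the Stiefel manifold $V$ of orthonormal $2$-frames. Both work; the paper's argument is shorter and outsources all the geometry to the single fact that $\Or(n)$ is a manifold, whereas yours only needs the (easier to verify) regular-level-set structure of $V$ and the Gram--Schmidt trivializations already built in the paper, at the cost of the inductive scaffolding and the bookkeeping you acknowledge. Two remarks. First, your initial step ($\tilde A:=\pi_2^{-1}(A)\in\I_V$) does not require trivializations at all: $\pi_2$ is a $\C^\infty$-submersion with $\pi_2(\tilde A)=A$, so the contrapositive of Lemma~\ref{submersion2} gives it immediately (this is exactly how the paper handles the analogous set $Z$ in the inductive part of Lemma~\ref{L1}). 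Second, your base case tacitly assumes the standard inner product (rotation by $\pi/2$ neither preserves a general unit circle nor general orthogonality); you should open, as the paper does, with the reduction to the standard inner product via a linear isometry, which transports $\I_{S^{n-1}}$ correctly by Lemma~\ref{111}.
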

\begin{proof}
It is enough to prove the assertion in the case where $\langle\cdot\vert\cdot\rangle$ is the standard inner product in $\R^n$, since between any two inner product structures in $\R^n$ there is a linear isometry, which yields a $\mathcal{C}^\infty$-diffeomorphism between their unit spheres.

Consider the group $\mathrm{GL}(n)$ of $n\times n$ real matrices with non-zero determinant. It may be identified with an open subset of $\R^{n^2}$ and hence - it is an $n^2$-manifold. It is well-known that the orthogonal group $$\mathrm{O}(n)=\{\boldsymbol{A}\in\mathrm{GL}(n):\, \boldsymbol{A}\boldsymbol{A}^T=\boldsymbol{I}_n\}$$forms a submanifold of $\mathrm{GL}(n)$ and its dimension equals $n(n-1)/2$ (see \cite[\S 3.5.5C]{abraham}). For any $i\in\{ 1,\ldots ,n\}$ let $\pi_i\colon\mathrm{O}(n)\to S^{n-1}$ be given by $\pi_i(\boldsymbol{A})=\boldsymbol{A}e_i$ (which is nothing else but the $i$th column vector of $\boldsymbol{A}$). Then $\pi_i$ is the restriction of the mapping $\oo{\pi}_i\colon\mathrm{GL}(n)\to\R^n$ defined by the formula analogous to the previous one. Since $$\mathrm{D}\oo{\pi}_i(\boldsymbol{A})\boldsymbol{B}=\boldsymbol{B}e_i\quad\mbox{for }\boldsymbol{A}\in\mathrm{GL}(n),\,\, \boldsymbol{B}\in\R^{n^2},$$the derivative $\mathrm{D}\oo{\pi}_i(\boldsymbol{A})$ is onto for any $\boldsymbol{A}\in\mathrm{GL}(n)$, thus $\oo{\pi}_i$ is a $\mathcal{C}^\infty$-submersion. By Lemma \ref{submersion}, $\pi_i$ is a $\mathcal{C}^\infty$-submersion as well.

Now, suppose on the contrary that each orthonormal basis of $\R^n$ has at least one entry belonging to $A$. In other words, for each $\boldsymbol{A}\in\mathrm{O}(n)$ there is $i\in\{ 1,\ldots ,n\}$ with $\pi_i(\boldsymbol{A})\in A$, i.e. $$\mathrm{O}(n)=\bigcup_{i=1}^n\pi_i^{-1}(A).$$Therefore, for a certain $i\in\{ 1,\ldots ,n\}$ we would have $\pi_i^{-1}(A)\not\in\I_{\mathrm{O}(n)}$. However, $A=\pi_i(\pi_i^{-1}(A))\in\I_{S^{n-1}}$, which contradicts the assertion of Lemma \ref{submersion2}, as $\pi_i$ is a $\mathcal{C}^\infty$-submersion.
\end{proof}

\section{Proof of the Theorem}For the uniqueness part of our Theorem suppose that there are two orthogonally additive functions $g_1$ and $g_2$ equal to $f$ $\I_n$-(a.e.). By the general form \eqref{1} of orthogonally additive mappings, we see that both $g_1$ and $g_2$ satisfy the Fr\'echet functional equation $\Delta_y^3g(x)=0$, thus arguing as in the proof of the uniqueness part of \cite[Theorem 1]{ger0}, or making use of \cite[Lemma 17.7.1]{kuczma}, we get $g_1=g_2$.

The proof of existence relies on some ideas from \cite{baron_ratz} and \cite{ratz}. Assume $G$ and $f$ are as in the Theorem. We start with the following trivial observation.
\begin{lemma}\label{L0}
The functions $f_1,f_2\colon\R^n\to G$ given by $$f_1(x)=f(x)-f(-x)\quad\mbox{and}\quad f_2(x)=f(x)+f(-x)$$satisfy $$f_1(x+y)=f_1(x)+f_1(y)\quad\mbox{and}\quad f_2(x+y)=f_2(x)+f_2(y)\quad\I_\bot\mbox{\rm -(a.e.).}$$
\end{lemma}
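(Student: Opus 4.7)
The plan is to reduce both assertions to the single observation that the negation map $\sigma\colon\bot^\ast\to\bot^\ast$, $\sigma(x,y)=(-x,-y)$, is a $\mathcal{C}^\infty$-diffeomorphism of $\bot^\ast$ onto itself. This is immediate: $\sigma$ is the restriction to $\bot^\ast$ of the linear map $(x,y)\mapsto(-x,-y)$ on $(\R^n\times\R^n)^\ast$, it preserves orthogonality (since $\langle -x\vert -y\rangle=\langle x\vert y\rangle$), it is its own inverse, and both $\sigma$ and $\sigma^{-1}$ extend to $\mathcal{C}^\infty$-maps on the ambient open set of $\R^{2n}$, so they are $\mathcal{C}^\infty$ on $\bot^\ast$ by the usual restriction principle.

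Set $Z=\{(x,y)\in\bot^\ast:f(x+y)\ne f(x)+f(y)\}$, which lies in $\I_\bot$ by hypothesis. Then by Lemma \ref{111} applied to $\sigma$, the set $\sigma(Z)=\{(x,y)\in\bot^\ast:(-x,-y)\in Z\}$ also belongs to $\I_\bot$, and therefore so does $Z\cup\sigma(Z)$. For every $(x,y)\in\bot^\ast\setminus(Z\cup\sigma(Z))$ we have simultaneously
\begin{equation*}
f(x+y)=f(x)+f(y)\qquad\mbox{and}\qquad f(-x-y)=f(-x)+f(-y).
\end{equation*}
Subtracting the second equation from the first gives $f_1(x+y)=f_1(x)+f_1(y)$, while adding them yields $f_2(x+y)=f_2(x)+f_2(y)$.

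There is essentially no obstacle: the only nontrivial ingredient is verifying that $\sigma$ maps $\bot^\ast$ diffeomorphically onto itself so that Lemma \ref{111} applies, and this is a one-line check. Everything else is bookkeeping about the countable union of two $\I_\bot$-negligible sets.
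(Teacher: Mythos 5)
Your argument is correct: the negation map is indeed a $\mathcal{C}^\infty$-diffeomorphism of $\bot^\ast$ onto itself, so Lemma \ref{111} gives $\sigma(Z)\in\I_\bot$, and off $Z\cup\sigma(Z)$ both identities follow by adding and subtracting. The paper states this lemma as a ``trivial observation'' with no written proof, and your reasoning is exactly the intended justification.
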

In the sequel we will be using hypothesis (H$_0$)-(H$_3$) and Lemmas \ref{dim}-\ref{222} without explicit mentioning.

For $k,m\in\N$ with $2\leq k\leq m$ we define $\Or(k,m)$ as the set of all $k$-tuples of mutually orthogonal (with respect to the usual scalar product) vectors from $\R^m$ with at most one of them being zero. Put $$\mathcal{R}_{k,m}=\{(x^{(1)},\ldots ,x^{(k)})\in (\R^m)^k:\, x^{(i)}=0\mbox{ for at most one }i=1,\ldots ,k\} .$$Then $\Or(k.m)=F^{-1}(0)$, where $F\colon\mathcal{R}_{k,m}\to\R^{{k(k-1)\over 2}}$ is given by $$\begin{array}{r}F(x^{(1)},\ldots ,x^{(k)})=(\langle x^{(1)}\vert x^{(2)}\rangle , \langle x^{(1)}\vert x^{(3)}\rangle ,\ldots , \langle x^{(1)}\vert x^{(k)}\rangle ,\,\,\\
\langle x^{(2)}\vert x^{(3)}\rangle ,\ldots , \langle x^{(2)}\vert x^{(k)}\rangle ,\,\,\\
\vdots\qquad\,\,\,\,\\
\langle x^{(k-1)}\vert x^{(k)}\rangle ).\end{array}$$
Since $0$ is a regular value of $F$, \cite[Theorem 9.11]{tu} implies that $\Or(k,m)$ is a submanifold of $\R^{km}$ with dimension $km-{1\over 2}k(k-1)$. In particular, $\Or(2,n)=\bot^\ast$.
\begin{lemma}\label{O(k,m)}
Let $k\in\N$, $k\geq 2$ and let $A\subset\Or(2,k)$ be a set such that $$\{(x^{(1)},\ldots ,x^{(k)})\in\Or(k,k):\, (x^{(1)},x^{(2)})\in A\}\in\I_{\Or(k,k)}.$$Then $A\in\I_{\Or(2,k)}$.
\end{lemma}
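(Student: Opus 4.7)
The plan is to realize $\pi\colon\Or(k,k)\to\Or(2,k)$, $\pi(x^{(1)},\ldots,x^{(k)})=(x^{(1)},x^{(2)})$, as a locally trivial fibration and conclude by Fubini (Lemma~\ref{222}) in local trivializations. The case $k=2$ is tautological since $\Or(2,2)=\Or(k,k)$, so assume $k\ge 3$. The subset $N\subset\Or(2,k)$ where one of the two entries vanishes is the disjoint union of two $k$-dimensional submanifolds of the $(2k-1)$-manifold $\Or(2,k)$, so $N\in\I_{\Or(2,k)}$ by Lemma~\ref{dim}(a); it therefore suffices to prove $A\cap\Or^\circ(2,k)\in\I_{\Or^\circ(2,k)}$, where $\Or^\circ(2,k):=\Or(2,k)\setminus N$ is open.

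Next I would construct local trivializations via Gram--Schmidt. Fix $(x_0,y_0)\in\Or^\circ(2,k)$ and an orthonormal basis $w_1,\ldots,w_{k-2}$ of $\mathrm{span}(x_0,y_0)^\perp$. On a sufficiently small neighborhood $U\subset\Or^\circ(2,k)$ of $(x_0,y_0)$, the tuple $(x,y,w_1,\ldots,w_{k-2})$ remains linearly independent, and Gram--Schmidt produces $\mathcal{C}^\infty$ maps $v_1,\ldots,v_{k-2}\colon U\to\R^k$ such that $(v_1(x,y),\ldots,v_{k-2}(x,y))$ is an orthonormal basis of $\mathrm{span}(x,y)^\perp$ at every $(x,y)\in U$. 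Let $F$ be the manifold of $(k-2)$-tuples of mutually orthogonal vectors in $\R^{k-2}$ with at most one zero (so $F=\R$ for $k=3$ and $F=\Or(k-2,k-2)$ for $k\ge 4$). Then
\[
\Phi_U\bigl((x,y),(a_3,\ldots,a_k)\bigr):=\Bigl(x,y,\sum_{j=1}^{k-2} a_3^j v_j(x,y),\ldots,\sum_{j=1}^{k-2} a_k^j v_j(x,y)\Bigr),
\]
with $a_i=(a_i^1,\ldots,a_i^{k-2})\in\R^{k-2}$, is a $\mathcal{C}^\infty$-diffeomorphism $U\times F\to\pi^{-1}(U)$, its inverse reading off the coordinates $\langle x^{(i)}\vert v_j(x^{(1)},x^{(2)})\rangle$. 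Since $B:=\pi^{-1}(A)\in\I_{\Or(k,k)}$ by hypothesis, Lemma~\ref{111} gives $(A\cap U)\times F=\Phi_U^{-1}(B\cap\pi^{-1}(U))\in\I_{U\times F}$. Applying Lemma~\ref{222} and observing that for $(x,y)\in A\cap U$ the $(x,y)$-section equals $F$ --- a non-empty manifold, hence not in $\I_F$ --- yields $A\cap U\in\I_U$. Covering $\Or^\circ(2,k)$ by countably many such $U$'s via Lindel\"of and using the $\sigma$-ideal property completes the proof.

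The main technical step is verifying that $\Phi_U$ really is a diffeomorphism onto $\pi^{-1}(U)$. One has to see that every $(x^{(1)},\ldots,x^{(k)})\in\pi^{-1}(U)$ decomposes uniquely as $x^{(i)}=\sum_j a_i^j v_j(x^{(1)},x^{(2)})$ with $(a_3,\ldots,a_k)\in F$: orthogonality of $x^{(3)},\ldots,x^{(k)}$ to $x^{(1)},x^{(2)}$ forces them into $\mathrm{span}(v_1,\ldots,v_{k-2})$, and the at-most-one-zero condition transfers from $(x^{(1)},\ldots,x^{(k)})$ to $(a_3,\ldots,a_k)$ since the $v_j$'s are linearly independent. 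Smoothness of the inverse is then immediate. Beyond this bookkeeping the argument is a routine Fubini application --- the crucial observation being that $B=\pi^{-1}(A)$ makes $\Phi_U^{-1}(B\cap\pi^{-1}(U))$ a \emph{product} $(A\cap U)\times F$, so that every non-negligible Fubini section comes from a point of $A$.
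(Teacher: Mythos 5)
Your proof is correct and follows essentially the same route as the paper's: both locally trivialize the projection $(x^{(1)},\ldots ,x^{(k)})\mapsto (x^{(1)},x^{(2)})$ by a Gram--Schmidt-produced orthonormal frame of the orthogonal complement of $\mathrm{span}(x^{(1)},x^{(2)})$, identify the preimage of $A$ over a trivializing set with a product $(A\cap U)\times\Or(k-2,k-2)$, and conclude via Lemmas~\ref{111} and~\ref{222} because the section over any point of $A$ is the whole (non-negligible) fiber manifold. The only differences are cosmetic: the paper trivializes over the explicit finite cover by the sets $D_{ij}$ where a fixed $2\times 2$ minor of $(x^{(1)},x^{(2)})$ is non-zero (completing the pair with standard basis vectors) while you use small neighborhoods plus Lindel\"of, and your justification via Lemma~\ref{dim}(a) for discarding pairs with a zero entry spells out what the paper dismisses with \lq\lq we may clearly assume\rq\rq .
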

\begin{proof}
Denote the above subset of $\Or(k,k)$ by $B$. We may clearly assume that for each $(x^{(1)},x^{(2)})\in A$ we have $x^{(1)}\not=0\not=x^{(2)}$. For $i,j\in\{ 1,\ldots ,k\}$ define $$D_{ij}=\Biggl\{(x^{(1)},x^{(2)})\in\Or(2,k):\,\det\Biggl(\begin{array}{cc}x_i^{(1)} & x_j^{(1)}\\ x_i^{(2)} & x_j^{(2)}\end{array}\Biggr)\not=0\Biggr\},$$ $$B_{ij}=\{(x^{(1)},\ldots ,x^{(k)})\in B:\, (x^{(1)},x^{(2)})\in D_{ij}\}.$$We will show that
\begin{equation}\label{A_and_B}
A=\bigcup_{\substack{i,j=1\\ i\not=j}}^k(A\cap D_{ij})\quad\mbox{and}\quad B=\bigcup_{\substack{i,j=1\\ i\not=j}}^kB_{ij}.
\end{equation}
For the former equality suppose that for some $(x^{(1)},x^{(2)})\in A$ and each pair of indices  $1\leq i,j\leq k$, $i\not=j$, we have
\begin{equation}\label{det=0}
\det\Biggl(\begin{array}{cc}x_i^{(1)} & x_j^{(1)}\\ x_i^{(2)} & x_j^{(2)}\end{array}\Biggr)=0.
\end{equation}
Then for each $1\leq i\leq k$ we have $x_i^{(1)}=0$ if and only if $x_i^{(2)}=0$. Indeed, choosing any $1\leq j\leq k$ such that $x_j^{(1)}\not=0$ we see from \eqref{det=0} that $x_i^{(1)}=0$ implies $x_i^{(2)}=0$; the reverse implication holds by symmetry. Now, let $1\leq i_1<\ldots <i_\ell\leq k$ be the indices of all non-zero coordinates of $x^{(1)}$ (and $x^{(2)}$). For each pair of $1\leq i,j\leq k$ one of the rows of the determinant in \eqref{det=0} is a multiple of the other. Applying this observation consecutively for the pairs $(i_1,i_2),(i_2,i_3),\ldots ,(i_{\ell-1},i_\ell)$ we infer that $x^{(1)}$ and $x^{(2)}$ are parallel. Since they are also orthogonal, one of them should be zero which is the case we have excluded. The former equality in \eqref{A_and_B} is thus proved, and its easy consequence is the latter one.

We are now to show that $A\cap D_{ij}\in\I_{\Or(2,k)}$ for each pair of indices $i,j\in\{ 1,\ldots ,k\}$ with $i\not=j$. So, fix any such pair and assume that $i<j$. Then for every $(x^{(1)},x^{(2)})\in D_{ij}$ the vectors: $$x^{(1)},x^{(2)},e_1,\ldots ,e_{i-1},e_{i+1},\ldots ,e_{j-1},e_{j+1},\ldots ,e_k$$form a basis of $\R^k$. Let $$\mathcal{B}(x^{(1)},x^{(2)})=\bigl(y_i(x^{(1)},x^{(2)})\bigr)_{i=1}^k$$be an orthonormal basis produced by the Gram-Schmidt process applied to that sequence of vectors. Since $x^{(1)}$ and $x^{(2)}$ are orthogonal, we have $$y_1(x^{(1)},x^{(2)})={x^{(1)}\over\|x^{(1)}\|}\quad\mbox{and}\quad y_2(x^{(1)},x^{(2)})={x^{(2)}\over\|x^{(2)}\|}\, .$$For $(x^{(1)},x^{(2)})\in D_{ij}$ define $\vartheta_{x^{(1)},x^{(2)}}\colon\R^k\to\R^k$ as the mapping which to every $z\in\R^k$ assigns its coordinates with respect to $\mathcal{B}(x^{(1)},x^{(2)})$, i.e. $$\vartheta_{x^{(1)},x^{(2)}}(z)=\boldsymbol{Y}(x^{(1)},x^{(2)})^{-1}z,$$where $$\boldsymbol{Y}(x^{(1)},x^{(2)})=\left({x^{(1)}\over\|x^{(1)}\|},{x^{(2)}\over\|x^{(2)}\|},y_3(x^{(1)},x^{(2)}),\ldots ,y_k(x^{(1)},x^{(2)})\right)$$is formed from the column vectors. Obviously, every $z$ belonging to the orthogonal complement $V(x^{(1)},x^{(2)})^\bot$ of the subspace spanned by $x^{(1)}$ and $x^{(2)}$ is mapped onto a certain vector of the form $(0,0,t_3,\ldots ,t_k)$ which may be naturally identified with an element of $\R^{k-2}$. Hence, we get a linear isomorphism $\gamma_{x^{(1)},x^{(2)}}\colon V(x^{(1)},x^{(2)})^\bot\to\R^{k-2}$ and we may define a mapping $$\Gamma\colon\{(x^{(1)},\ldots ,x^{(k)})\in\Or(k,k):\, (x^{(1)},x^{(2)})\in D_{ij}\}\to\bigl(\Or(2,k)\cap D_{ij}\bigr)\times\Or(k-2,k-2)$$by the formula $$\Gamma(x^{(1)},\ldots ,x^{(k)})=\bigl((x^{(1)},x^{(2)}),(\gamma_{x^{(1)},x^{(2)}}(x^{(3)}),\ldots ,\gamma_{x^{(1)},x^{(2)}}(x^{(k)}))\bigr).$$The definition is well-posed, since $\vartheta_{x^{(1)},x^{(2)}}$, and hence also $\gamma_{x^{(1)},x^{(2)}}$, is an isometry for each $(x^{(1)},x^{(2)})\in D_{ij}$. Moreover, it is easily seen that $\Gamma$ is a $\mathcal{C}^\infty$-diffeomorphism (the formulas of the Gram-Schmidt procedure are $\mathcal{C}^\infty$).

It easily follows from $B\in\I_{\Or(k,k)}$ that $B_{ij}$ belongs to the corresponding ideal of subsets of $$\{(x^{(1)},\ldots ,x^{(k)})\in\Or(k,k):\, (x^{(1)},x^{(2)})\in D_{ij}\},$$thus $\Gamma(B_{ij})$ belongs to the ideal corresponding to $(\Or(2,k)\cap D_{ij})\times\Or(k-2,k-2)$. Finally, observe that $$\Gamma(B_{ij})=(A\cap D_{ij})\times\Or(k-2,k-2),$$which yields $A\cap D_{ij}\in\I_{\Or(2,k)\cap D_{ij}}$ and hence also $A\cap D_{ij}\in\I_{\Or(2,k)}$.
\end{proof}

\begin{lemma}\label{L1}
If an odd function $h\colon\R^n\to G$ satisfies $h(x+y)=h(x)+h(y)$ $\I_\bot${\rm -(a.e.)\/}, then there is an additive function $b\colon\R^n\to G$ such that $h(x)=b(x)$ $\I_n${\rm -(a.e.)\/}.
\end{lemma}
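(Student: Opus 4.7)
My plan is to establish, in two stages, that $h$ coincides $\I_n$-almost everywhere with an additive function. First I would show that the full Cauchy equation $h(x+y)=h(x)+h(y)$ holds $\I_{2n}$-a.e.\ on $\R^n\times\R^n$; then I would invoke a de Bruijn--Jurkat--Ger-style extension theorem for the ideal system $(\I_m)$ (cf.\ \cite{ger2}) to produce an additive $b\colon\R^n\to G$ with $h=b$ $\I_n$-a.e.

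For the first stage I would use the classical R\"atz--Szab\'o auxiliary-vector trick, adapted to the almost-everywhere setting. In the easier range $n\geq 3$, for generic $(x,y)$ the orthogonal complement $\{x,y\}^\perp$ has dimension $n-2\geq 1$, so I may pick $u\in\{x,y\}^\perp$ with $\|u\|^2=|\langle x\vert y\rangle|$. When $\langle x\vert y\rangle\geq 0$ the pair $(x+u,y-u)$ is orthogonal with sum $x+y$, and applying the hypothesis to the orthogonal pairs $(x+u,y-u)$, $(x,u)$, $(y,-u)$ together with the oddness of $h$ gives
\[
h(x+y)=h(x+u)+h(y-u)=h(x)+h(u)+h(y)-h(u)=h(x)+h(y).
\]
When $\langle x\vert y\rangle<0$ one uses instead $(x+u,y+u)$ combined with $(x+y)\perp 2u$, arriving at $h(x+y)+h(2u)=h(x)+h(y)+2h(u)$; the auxiliary identity $h(2u)=2h(u)$ is then obtained by the orthogonal decomposition $2u=(u+u')+(u-u')$ for any $u'\perp u$ with $\|u'\|=\|u\|$. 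The Fubini-type machinery of Lemmas \ref{111}, \ref{222}, and \ref{immersion}, applied to the joint $(x,y,u,u')$-parameter, guarantees that for $\I_{2n}$-a.e.\ $(x,y)$ the vectors $u$ and $u'$ can be chosen so that all the relevant pairs lie outside $Z(h)$. In the harder case $n=2$ the subspace $\{x,y\}^\perp$ is trivial for generic $(x,y)$ and the above trick fails; here I would invoke Lemma \ref{sfera} to fix an orthonormal basis $(e_1,e_2)$ of $\R^2$ whose coordinate directions avoid a prescribed $\I_{S^1}$-null set, parametrize $h$ in that basis as $H(r,s)=h(re_1+se_2)$, and exploit the orthogonality constraint $rr'+ss'=0$ together with oddness of $H$ to kill the symmetric quadratic obstruction and conclude additivity of $H$ (hence of $h$) $\I_4$-a.e.

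The principal obstacle I anticipate is the case $n=2$, where Lemma \ref{sfera} must be combined with a delicate parametric analysis in a fixed basis. A secondary difficulty is the careful Fubini bookkeeping required to ensure that all the various "good-pair" conditions can be satisfied simultaneously for the auxiliary vectors $u,u'$; this is where the machinery of Lemmas \ref{111}--\ref{immersion} plays its role, together with a verification that the relevant parametrization maps are smooth submersions. The second stage (the passage from almost-additivity to a genuine additive $b$) is comparatively standard once a de Bruijn-type theorem for the ideal system $(\I_m)$ is at hand, and may alternatively be carried out by defining $b(x):=h(x+v)-h(v)$ for a generic $v$ and verifying well-definedness by applying the almost-additivity to two such representations.
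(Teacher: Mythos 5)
Your strategy for $n\geq 3$ is a genuinely different (and viable) route from the paper's: instead of inducting on the dimension and restricting $h$ to hyperplanes $P_x$, you run the classical R\"atz auxiliary-vector argument to upgrade the hypothesis to the unconditional Cauchy equation $\I_{2n}$-a.e.\ and then invoke Ger's almost-additivity theorem. The algebra is right (the pairs $(x+u,y-u)$, $(x,u)$, $(y,-u)$, and in the case $\langle x\vert y\rangle<0$ the decomposition $2u=(u+u')+(u-u')$, do the job, with oddness cancelling the $h(u)$ terms), and the Fubini bookkeeping is in principle available: the set of bad triples $(x,y,u)$ is the preimage of $Z(h)$ under three maps into $\bot^\ast$, so the contrapositive of Lemma \ref{submersion2} reduces everything to checking that these maps are $\mathcal{C}^\infty$-submersions on the relevant parameter manifold. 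You assert but do not perform these submersion verifications; they are plausible (the dimension count $3n-3\geq 2n-1$ works out, with the $n=3$ case needing a separate word since the $u$-fibre is then a two-point set and one must use Lemma \ref{111} on the sheets rather than Lemma \ref{222}), but they are real work, not a formality. The second stage is fine provided you cite a version of the de Bruijn--Jurkat--Ger theorem valid for maps from $(\R^n,+)$ into an arbitrary Abelian group relative to a proper linearly invariant ideal with conjugate ideal containing $\I_{2n}$; conditions (H$_1$) and (H$_2$) supply exactly the hypotheses needed there.

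The genuine gap is the case $n=2$, which is precisely where the paper invests almost all of its effort. For $n=2$ the complement $\{x,y\}^\perp$ is trivial for generic $(x,y)$, so the auxiliary-vector trick is unavailable, and your replacement --- ``exploit the orthogonality constraint $rr'+ss'=0$ together with oddness of $H$ to kill the symmetric quadratic obstruction'' --- is a statement of intent, not an argument. The paper's treatment of $n=2$ requires: (i) parametrizing $\bot^{(1)}$ as $S^{n-1}_+\times Q$ via the explicit diffeomorphism $\Lambda$ built from the maps $\Phi_x(\lambda,y)=(\lambda x+y,\tfrac{\|y\|^2}{\lambda}x-y)$, together with the companion families $R(x)$, $S(x)$, so that Fubini localizes the exceptional set to a null set $S_0$ of directions; (ii) Lemma \ref{sfera} to extract an orthogonal basis $(x^{(1)},x^{(2)})$ avoiding $S_0$; (iii) the identity $h_i(\lambda+\mu)=h_i(\lambda)+h_i(\mu)$ for $\mu=\|y\|^2/\lambda$, which only yields a Cauchy equation restricted to each half-line, so de Bruijn must be applied separately on $(0,\infty)$ and $(-\infty,0)$ and the two additive pieces glued using oddness. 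None of this is present in your sketch, and without it the proof does not cover $n=2$ (which is also the base case if one wanted to keep the paper's induction). Until you supply a concrete argument here, the proposal is incomplete.
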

\begin{proof}
Due to some isometry formalities, we may suppose $\langle\cdot\vert\cdot\rangle$ to be the standard inner product in $\R^n$.

Define $$W=\{ x=(x_1,\ldots ,x_n)\in\R^n:\,x_i=0\mbox{ for some }i\}$$and $$S_+^{n-1}=\{x=(x_1,\ldots ,x_n)\in S^{n-1}:\, x_n>0\} .$$Since $S_+^{n-1}$ is an open subset of $S^{n-1}$, it is an $(n-1)$-manifold. For any $x\in S_+^{n-1}$ let $$T_x=\{(\lambda,y)\in\R^\ast\times P_x^\ast :\,\lambda^2=\|y\|^2\}.$$Define a~map $\oo{\Phi}_x\colon \R^\ast\times P_x^\ast\to\R^n\times\R^n$ by
\begin{equation}\label{PhiX}
\oo{\Phi}_x(\lambda ,y)=\Bigl(\lambda x+y,{\|y\|^2\over\lambda}x-y\Bigr),
\end{equation}
and set $\Phi_x=\oo{\Phi}_x|_{(\R^\ast\times P_x^\ast)\setminus T_x}$. Let also $Q(x)=\Phi_x\bigl((\R^\ast\times P_x^\ast)\setminus T_x\bigr)\subset\bot^\ast$. We are going to show that for every $x\in P:=S_+^{n-1}\setminus W$ the set $Q(x)$ forms a submanifold of $\bot^\ast$.

At the moment, let $x\in S_+^{n-1}$. For brevity, denote $\mu=\mu(\lambda,y)=\|y\|^2/\lambda$. It is easily seen that for each $(t,u)=(\lambda x+y,\mu x-y)\in Q(x)$ all four vectors: $t$, $u$, $x$, $y$ belong to the subspace $V(t,x)$ of $\R^n$ spanned by $t$ and $x$. Choose an arbitrary non-zero vector $z(t,x)\in V(t,x)$, orthogonal to $x$. Then $z(t,x)$ is collinear with $y$, hence the equality $t=\lambda x+y$ represents $t$ in terms of the basis $(x,z(t,x))$ of $V(t,x)$. Therefore, $\lambda$ and $y$ are uniquely determined by $t$, which proves that $\Phi_x$ is injective.

In order to show that $\Phi_x^{-1}$ is continuous fix an arbitrary $(t,u)\in Q(x)$. Now, put $z(t,x)=\langle t\vert x\rangle x-t$; then $(x,z(t,x))$ is an orthogonal basis of $V(t,x)$. Since $t=\lambda x+y$ for certain $\lambda\in\R^\ast$ and $y\in P_x^\ast$, we have $t=\lambda x+\alpha z(t,x)$ for some $\alpha\in\R$, whence we find that $\lambda=\langle t\vert x\rangle$ and $y=t-\langle t\vert x\rangle x$. We have thus shown that $\Phi_x$ is a homeomorphism.

Now, fix $x\in P$. We shall prove that $\Phi_x$ is a $\mathcal{C}^\infty$-immersion. To this end put $$V_x=\Bigl\{(\lambda,y)\in\R^\ast\times (\R^n)^\ast :\,\lambda=\langle x\vert y\rangle\pm\sqrt{\langle x\vert y\rangle^2+\|y\|^2}\Bigr\}$$and define a mapping $\hat{\Phi}_x\colon (\R^\ast\times (\R^n)^\ast)\setminus V_x\to\R^n\times\R^n$ by the formula analogous to \eqref{PhiX}. Then $(\R^\ast\times P_x^\ast)\setminus T_x$ is a submanifold of $(\R^\ast\times (\R^n)^\ast)\setminus V_x$. Let $(\lambda ,y)\in(\R^\ast\times (\R^n)^\ast)\setminus V_x$. If we show that the derivative $\mathrm{D}\hat{\Phi}_x(\lambda ,y)$ is injective, then, in view of Lemma \ref{submersion}, we will be done. Since $$\mathrm{D}\hat{\Phi}_x(\lambda ,y)=\left(\begin{array}{c|c}\begin{array}{c}x_1\\ x_2\\ \vdots\\ x_n\end{array} & \begin{array}{cccc}1 & 0 & \ldots & 0\\ 0 & 1 & \ldots & 0\\ \vdots & \vdots & \ddots & \vdots\\ 0 & 0 & \ldots & 1\end{array}\\ \hline\vspace{4pt}
_{\displaystyle{{\partial (\mu x-y)\over\partial\lambda}}} & _{\displaystyle{{\partial (\mu x-y)\over\partial y}}}\end{array}\right),$$we immediately get that $\mathrm{rank}\,\mathrm{D}\hat{\Phi}_x(\lambda ,y)\geq n$, where the equality occurs only if the first column vector is a linear combination of the remaining $n$ column vectors with coefficients $x_1,\ldots ,x_n$. However, this would imply that for each $i\in\{ 1,\ldots ,n\}$ we have $${\partial (\mu x_i-y_i)\over\partial\lambda}=\sum_{j=1}^nx_j{\partial (\mu x_i-y_i)\over\partial y_j},$$i.e. $$\lambda^2-2\langle x\vert y\rangle\lambda-\|y\|^2=0,$$which is not the case, since $(\lambda,y)\not\in V_x$. As a result, we obtain $\mathrm{rank}\,\mathrm{D}\hat{\Phi}_x(\lambda ,y)=n+1$, thus $\mathrm{D}\hat{\Phi}_x(\lambda ,y)$ is injective.

We have shown that $\Phi_x$ is an embedding (i.e. homeomorphic $\mathcal{C}^\infty$-immersion) of $(\R^\ast\times P_x^\ast)\setminus T_x$ into $\bot^\ast$. By virtue of \cite[Theorem 11.17]{tu}, its image $Q(x)$ is a submanifold of $\bot^\ast$.

Observe that the manifolds $Q(x)$, for $x\in P$, are $\mathcal{C}^\infty$-diffeomorphic each to others. Indeed, by the remarks following the statement of our Theorem, for each $x\in X$ the function $\Psi_x\colon\R^\ast\times P_x^\ast\to\R^\ast\times (\R^{n-1})^\ast$ defined by the formula
\begin{equation}\label{psipsi}
\Psi_x(\lambda ,y)=(\lambda ,\w{\psi}_x(y)),
\end{equation}
where $\psi_x(y)=\boldsymbol{Y}(x)^{-1}y$ is defined as earlier and the tilde operator deletes the first coordinate (which equals $0$ for $y\in P_x$), is a~$\mathcal{C}^\infty$-diffeomorphism. Moreover, $\Psi_x$ maps $(\R^\ast\times P_x^\ast)\setminus T_x$ onto the set $$U:=\{(\lambda,y)\in\R^\ast\times (\R^{n-1})^\ast :\,\lambda^2\not=\|y\|^2\},$$which follows from the fact that $\w{\psi}_x$ is an isometry. Therefore, for each $x,y\in P$, the mapping $\Phi_y\circ\Psi_y^{-1}\circ\Psi_x\circ\Phi_x^{-1}$ yields a $\mathcal{C}^\infty$-diffeomorphism between $Q(x)$ and $Q(y)$. So, we pick any $x_0\in P$ and we regard the set $Q:=Q(x_0)$ as a \lq\lq model\rq\rq\ manifold for all $Q(x)$'s.

Define $$\bot^{(1)}=\{ (t,u)\in\bot^\ast :\, t_n+u_n\not=0,\, t\not=0,\, u\not=0\,\,\mbox{ and }\,\,\|t\|\not=\|u\|\}$$ (which is an open subset, and hence it is a~submanifold, of $\bot^\ast$) and observe that
\begin{equation}\label{QS1}
\bot^{(1)}=\bigcup_{x\in S_+^{n-1}}Q(x).
\end{equation}
In fact, for any $(t,u)\in\bot^{(1)}$ put
\begin{equation}\label{xtu}
x=\mathrm{sgn}(t_n+u_n){t+u\over\|t+u\|}.
\end{equation}
Then $x\in S_+^{n-1}$ and $(t,u)\in Q(x)$. Indeed, if we choose any $y_0\in P_x^\ast\cap V(t,u)$ with $\|y_0\|=1$ (which is unique up to a sign), then $t$ and $u$ are represented in terms of the basis $(x,y_0)$ of $V(t,u)$ as follows: $$t=\langle t\vert x\rangle x+\langle t\vert y_0\rangle y_0\,\,\mbox{ and }\,\,u=\langle u\vert x\rangle x+\langle u\vert y_0\rangle y_0,$$and we have
\begin{equation*}
\langle t\vert y_0\rangle=\langle t+u\vert y_0\rangle-\langle u\vert y_0\rangle=\pm\|t+u\|\langle x\vert y_0\rangle-\langle u\vert y_0\rangle=-\langle u\vert y_0\rangle .
\end{equation*}
Hence, after substitution $\lambda=\langle t\vert x\rangle$ and $y=\langle t\vert y_0\rangle y_0$, we obtain $t=\lambda x+y$ and $u=\langle u\vert x\rangle x-y$. The coefficient $\langle u\vert x\rangle$ equals $\|y\|^2/\lambda$, since $\langle t\vert u\rangle=\langle x\vert y\rangle=0$. Moreover, $\lambda\not=0$, $y_0\not=0$, and it follows from $\|t\|\not=\|u\|$ that $\lambda^2\not=\langle u\vert x\rangle^2=\|y\|^4/\lambda^2$, which gives $\lambda^2\not=\|y\|^2$. Consequently, $(t,u)\in Q(x)$ and thus we have proved the inclusion \lq\lq $\subseteq$\rq\rq . The reverse inclusion is a straightforward calculation.

We shall now prove that the mapping $\Lambda\colon S_+^{n-1}\times U\to\bot^{(1)}$ defined by $$\Lambda(x,\lambda ,y)=\Phi_x\circ\Psi_x^{-1}(\lambda,y)$$is a $\mathcal{C}^\infty$-diffeomorphism.

First, in view of \eqref{QS1}, it is easily seen that the image of $\Lambda$ is $\bot^{(1)}$. According to the definition, $\Lambda$ is $\mathcal{C}^\infty$. Moreover, for each $(t,u)=\Phi_x\bigl(\lambda ,\w{\psi}_x^{-1}(y)\bigr)\in Q(x)$ we have
\begin{equation}\label{QS2}
\Bigl(\lambda+{\|\w{\psi}_x^{-1}(y)\|^2\over\lambda}\Bigr)x=t+u,
\end{equation}
which, jointly with the fact that $x\in S_+^{n-1}$, uniquely determines $x$. By the injectivity of $\Phi_x$, we infer that $\lambda$ and $y$ are then uniquely determined by $t$ and $u$ as well. Therefore, $\Lambda$ is injective.

In order to get a formula for $\Lambda^{-1}$, observe that for each $(t,u)=\Phi_x\bigl(\lambda ,\w{\psi}_x^{-1}(y)\bigr)\in\bot^{(1)}$ equality \eqref{QS2} yields \eqref{xtu}. This means that $x$ is expressed as a function of $t$ and $u$, which is $\mathcal{C}^\infty$ on both components of the set $\bot^{(1)}$. By the formula for $\Phi_x^{-1}$, we get $$\lambda=\mathrm{sgn}(t_n+u_n){\langle t\vert t+u\rangle\over\|t+u\|}\,\,\mbox{ and }\,\, y=\w{\psi}_x\Bigl(t-{\langle t\vert t+u\rangle\over\|t+u\|^2}(t+u)\Bigr),$$and since the value of $\w{\psi}_x$ at a given point is a $\mathcal{C}^\infty$ function of $x$, we infer that $\Lambda^{-1}$ is $\mathcal{C}^\infty$. Consequently, $\Lambda$ is a $\mathcal{C}^\infty$-diffeomorphism.

Let $\chi\colon\bot^{(1)}\to S_+^{n-1}\times Q$ be given by $$\chi=(\mathrm{id}_{S_+^{n-1}}\times\oo{\Phi}_{x_0})\circ (\mathrm{id}_{S_+^{n-1}}\times\Psi_{x_0}^{-1})\circ\Lambda^{-1};$$then $\chi$ is a $\mathcal{C}^\infty$-diffeomorphism. Since $Z(h)\in\I_\bot$ and $\bot^{(1)}$ is an open subset of $\bot^\ast$, we have $Z(h)\cap\bot^{(1)}\in\I_{\bot^{(1)}}$. Therefore,
\begin{equation}\label{QS3}
\{ x\in S_+^{n-1}:\,\chi(Z(h)\cap\bot^{(1)})[x]\not\in\I_Q\}\in\I_{S_+^{n-1}}.
\end{equation}
Let $x\in S_+^{n-1}$. For any $q\in Q$ we have $$q\in \chi(Z(h)\cap\bot^{(1)})[x] \Longleftrightarrow (x,q)\in\chi(Z(h)\cap\bot^{(1)}) \Longleftrightarrow \chi^{-1}(x,q)\in Z(h)\cap\bot^{(1)}.$$ Plainly, $$\chi^{-1}=\Lambda\circ (\mathrm{id}_{S_+^{n-1}}\times\Psi_{x_0})\circ (\mathrm{id}_{S_+^{n-1}}\times\oo{\Phi}_{x_0})^{-1},$$so the last condition is equivalent to $\Lambda(x,(\Psi_{x_0}\circ\oo{\Phi}^{-1}_{x_0})(q))\in Z(h)$. We have thus shown that $$\chi(Z(h)\cap\bot^{(1)})[x]=\{ q\in Q:\,\Lambda(x,(\Psi_{x_0}\circ\oo{\Phi}_{x_0}^{-1})(q))\in Z(h)\} .$$ Since the map $\Psi_{x_0}\circ\oo{\Phi}_{x_0}^{-1}\colon Q\to U$ is a~diffeomorphism, axiom (H$_1$) and Lemma \ref{111} imply that:\begin{equation*}
\begin{split}
\chi(Z(h)&\cap\bot^{(1)})[x]\not\in\I_Q\\
&\Longleftrightarrow\{(\lambda ,y)\in U :\,\Lambda(x,\lambda ,y)\in Z(h)\}\not\in\I_n\\
&\Longleftrightarrow\{(\lambda ,y)\in U :\,\oo{\Phi}_x(\lambda ,\w{\psi}_x^{-1}(y))\in Z(h)\}\not\in\I_n\\
&\Longleftrightarrow\{(\lambda ,y)\in\R^\ast\times (\R^{n-1})^\ast :\,\oo{\Phi}_x(\lambda ,\w{\psi}_x^{-1}(y))\in Z(h)\}\not\in\I_n\\
&\Longleftrightarrow\{(\lambda ,y)\in\R^\ast\times P_x^\ast :\,\oo{\Phi}_x(\lambda ,y)\in Z(h)\}\not\in\I_{\R^\ast\times P_x^\ast}\\
&\Longleftrightarrow\{(\lambda ,y)\in (\R^\ast\times P_x^\ast )\setminus T_x :\,\Phi_x(\lambda ,y)\in Z(h)\}\not\in\I_{(\R^\ast\times P_x^\ast)\setminus T_x}\\
&\Longleftrightarrow Z(h)\cap Q(x)\not\in\I_{Q(x)}.
\end{split}
\end{equation*}
Thus \eqref{QS3} gives
\begin{equation*}
\{ x\in P:\, Z(h)\cap Q(x)\not\in\I_{Q(x)}\}\in\I_{S_+^{n-1}}.
\end{equation*}
Since $S_+^{n-1}\setminus P\in\I_{S_+^{n-1}}$, we have also
\begin{equation}\label{S1}
Z(h)\cap Q(x)\in\I_{Q(x)}\quad\I_{S_+^{n-1}}\mbox{-(a.e.)}.
\end{equation}

For any $x\in S_+^{n-1}$ define $\Gamma_x\colon\R^\ast\times P_x^\ast\to\bot^\ast$ and $\Theta_x\colon\R^\ast\times P_x^\ast\to\bot^\ast$ as
\begin{equation*}
\Gamma_x(\lambda ,y)=\Bigl({\|y\|^2\over\lambda}x,-y\Bigr)\,\,\mbox{ and }\,\,\Theta_x(\lambda ,y)=(\lambda x,y),
\end{equation*}
and put $R(x)=\Gamma_x(\R^\ast\times P_x^\ast)$, $S(x)=\Theta_x(\R^\ast\times P_x^\ast)$. An argument similar to the one above shows that $R(x)$, for $x\in S_+^{n-1}$, are submanifolds of $\bot^\ast$, $\mathcal{C}^\infty$-diffeomorphic each to others, and the same is true for $S(x)$'s. Moreover, the set $$\bot^{(2)}:=\{ (t,u)\in\bot^\ast :\, t_n\not=0\mbox{ and }u\not=0\}=\bigcup_{x\in S_+^{n-1}}R(x)=\bigcup_{x\in S_+^{n-1}}S(x)$$is $\mathcal{C}^\infty$-diffeomorphic to $S_+^{n-1}\times R$ and $S_+^{n-1}\times S$, where $R$ and $S$ are \lq\lq model\rq\rq\ manifolds for all $R(x)$'s and for all $S(x)$'s, respectively. Arguing further, analogously as above, we also infer that
\begin{equation}\label{S2}
Z(h)\cap R(x)\in\I_{R(x)}\,\,\mbox{ and }\,\, Z(h)\cap S(x)\in\I_{S(x)}\quad\I_{S_+^{n-1}}\mbox{-(a.e.)}.
\end{equation}

According to \eqref{S1} and \eqref{S2} there is a set $S_0\in\I_{S_+^{n-1}}$ with
\begin{equation}\label{S0}
\left\{\begin{array}{l}
Z(h)\cap Q(x)\in\I_{Q(x)},\\
Z(h)\cap R(x)\in\I_{R(x)},\\
Z(h)\cap S(x)\in\I_{S(x)}\end{array}\right.
\end{equation}
for $x\in S_+^{n-1}\setminus S_0$.

At the moment, assume that $n=2$. Applying Lemma \ref{sfera} to the set $$A:=S_0\cup (-S_0)\cup\{(-1,0),(1,0)\}\in\I_{S^1},$$and changing signs of vectors of the obtained basis as required, we get an orthogonal basis $(x^{(1)},x^{(2)})$ of $\R^2$ whose each element $x$ satisfies conditions \eqref{S0}.

Now, we shall prove that for each $i\in\{ 1,2\}$ the function $h_i\colon\R\to G$ given by $h_i(\lambda)=h(\lambda x^{(i)})$ satisfies
\begin{equation}\label{hi}
h_i(\lambda+\mu)=h_i(\lambda)+h_i(\mu)\quad\Omega(\I_{(0,\infty)})\mbox{-(a.e.)},
\end{equation}
where $\Om(\I_{(0,\infty)})=\{ A\subset (0,\infty)^2:\, A[x]\in\I_{(0,\infty)}\,\,\,\I_{(0,\infty)}\mbox{-(a.e.)}\}$ is the so called conjugate ideal. Plainly, condition \eqref{hi} would imply that the same is true with $(0,\infty)$ replaced by $(-\infty,0)$, due to the oddness of the function $h$.

Fix $i\in\{ 1,2\}$. In view of \eqref{S0}, with $x$ replaced by $x^{(i)}$, there is a set $C_i\in\I_{\R^\ast\times P_{x^{(i)}}^\ast}$ such that
\begin{equation}\label{Cii}
\left\{\begin{array}{l}
\Bigl(\lambda x^{(i)}+y,\displaystyle{{\|y\|^2\over\lambda}x^{(i)}}-y\Bigr)\in\bot^\ast\setminus  Z(h),\\
\Bigl(\displaystyle{{\|y\|^2\over\lambda}}x^{(i)},-y\Bigr)\in\bot^\ast\setminus  Z(h),\vspace*{1mm}\\
\,\bigl(\lambda x^{(i)},y\bigr)\in\bot^\ast\setminus Z(h)
\end{array}\right.
\end{equation}
for $(\lambda ,y)\in (\R^\ast\times P_{x^{(i)}}^\ast)\setminus C_i$ (note that $T_{x^{(i)}}\in\I_{\R^\ast\times P_{x^{(i)}}^\ast}$, so we may include the set $T_{x^{(i)}}$ into $C_i$ and we see that the difference between the domain of $\Phi_{x^{(i)}}$ and the domains of $\Gamma_{x^{(i)}}$, $\Theta_{x^{(i)}}$ causes no trouble at all). Therefore, for all $\lambda\in\R$ except a set $\Lambda_i\in\I_1$ the conjunction \eqref{Cii} holds true for all $y\in P_{x^{(i)}}\setminus Y_i(\lambda)$ with $Y_i(\lambda)\in\I_{P_{x^{(i)}}}$. Let $$B_i(\lambda)=\left\{{\|y\|^2\over\lambda}:\, y\in P_{x^{(i)}}\setminus Y_i(\lambda)\right\} .$$Then, obviously, $\R\setminus B_i(\lambda)\in\I_{(0,\infty)}$ for each positive $\lambda\not\in\Lambda_i$, whereas $\R\setminus B_i(\lambda)\in\I_{(-\infty,0)}$ for each negative $\lambda\not\in\Lambda_i$. For every pair $(\lambda ,\mu)$ with $\lambda\not\in\Lambda_i$ and $\mu\in B_i(\lambda)$, $\mu={\|y\|^2\over\lambda}$, we have
\begin{equation*}
\begin{split}
h_i(\lambda+\mu)&=h\Bigl(\lambda x^{(i)}+y+{\|y\|^2\over\lambda}x^{(i)}-y\Bigr)=h(\lambda x^{(i)}+y)+h\Bigl({\|y\|^2\over\lambda}x^{(i)}-y\Bigr)\\
&=h(\lambda x^{(i)})+h(y)+h\Bigl({\|y\|^2\over\lambda}x^{(i)}\Bigr)+h(-y)=h_i(\lambda)+h_i(\mu),
\end{split}
\end{equation*}
which proves \eqref{hi}. Applying the theorem of de Bruijn \cite{bruijn} separately to the functions $h_i\vert_{(0,\infty)}$ and $h_i\vert_{(-\infty,0)}$ we get two additive mappings $b_i^\prime\colon (0,\infty)\to G$ and $b_i^{\prime\prime}\colon (-\infty,0)\to G$ which coincide with these two restrictions of $h_i$ almost everywhere in $(0,\infty)$ and $(-\infty,0)$, respectively. However, since $h$ is odd, the extensions of both $b_i^\prime$ and $b_i^{\prime\prime}$ to the whole real line have to be the same. As a result, there is an additive function $b_i\colon\R\to G$ such that $h_i(\lambda)=b_i(\lambda)$ for $\lambda\in\R\setminus Z_i$ with a certain $Z_i\in\I_1$.

Define a function $b\colon\R^2\to G$ by $b(x)=b_1(\lambda_1)+b_2(\lambda_2)$, where $\lambda_i$ is the $i$th coordinate of $x$ with respect to the basis $(x^{(1)},x^{(2)})$. Plainly, $b$ is an additive function. It remains to show that $h(x)=b(x)$ $\I_2$-(a.e.).

Recall that for every $x\in X=\R\times\R^\ast$ the mapping $\Psi_x$ defined by \eqref{psipsi} yields a $\mathcal{C}^\infty$-diffeomorphism between $\R^\ast\times P_x^\ast$ and $\R^\ast\times\R^\ast$. In particular, we have $C:=\Psi_{x^{(1)}}(C_1)\in\I_2$ and
\begin{equation}\label{Ciii}
\bigl(\lambda x^{(1)},\w{\psi}_{x^{(1)}}^{-1}(y)\bigr)\in\bot^\ast\setminus Z(h)\quad\mbox{for }(\lambda ,y)\in\R^2\setminus C.
\end{equation}
Define $\Delta\colon\R^2\to\R^2$ by $$\Delta(\lambda_1,\lambda_2)=\bigl(\lambda_1,\w{\psi}_{x^{(1)}}(\lambda_2x^{(2)})\bigr).$$
Plainly, $\Delta$ is a $\mathcal{C}^\infty$-diffeomorphism, so $\Delta^{-1}(C)\in\I_2$. Therefore, $$\Delta^{-1}(C)\cup(Z_1\times\R)\cup(\R\times Z_2)\in\I_2$$and for each pair $(\lambda_1,\lambda_2)\in\R^2$ outside this set condition \eqref{Ciii} implies $(\lambda_1x^{(1)},\lambda_2x^{(2)})\in\bot^\ast\setminus Z(h)$, thus
\begin{equation*}
\begin{split}
h(\lambda_1x^{(1)}+\lambda_2x^{(2)})&=h(\lambda_1x^{(1)})+h(\lambda_1x^{(1)})=h_1(\lambda_1)+h_2(\lambda_2)\\
&=b_1(\lambda_1)+b_2(\lambda_2)=b(\lambda_1x^{(1)}+\lambda_2x^{(2)}).
\end{split}
\end{equation*}
By the isomorphism, which to every $x\in\R^2$ assigns its coordinates in the basis $(x^{(1)},x^{(2)})$, we have $h(x)=b(x)$ $\I_2$-(a.e.) and our assertion for $n=2$ follows.

In the sequel, assume that $n\geq 3$ and the assertion holds true for $n-1$ in the place of $n$.

Define $\Or(n-1,n)^\prime$ to be the set of all $(n-1)$-tuples from $\Or(n-1,n)$ generating a subspace of $\R^n$ whose orthogonal complement is spanned by a vector  $(x_1,\ldots ,x_n)$ with $x_n\not=0$. In other words, $$\Or(n-1,n)^\prime=\left\{ (x^{(1)},\ldots ,x^{(n-1)})\in\Or(n-1,n):\,\pm{x^{(1)}\wedge\ldots\wedge x^{(n-1)}\over\|x^{(1)}\wedge\ldots\wedge x^{(n-1)}\|}\in S_+^{n-1}\right\},$$
where $\wedge$ stands for the wedge product in $\R^n$. This set, being an open subset of $\Or(n-1,n)$, is its submanifold having the same dimension. Consider the mapping $\Om\colon S_+^{n-1}\times\Or(n-1,n-1)\to\Or(n-1,n)^\prime$ defined by $$\Om(x,x^{(1)},\ldots ,x^{(n-1)})=\bigl(\w{\psi}_x^{-1}(x^{(1)}),\ldots ,\w{\psi}_x^{-1}(x^{(n-1)})\bigr).$$The values of $\Om$ indeed belong to $\Or(n-1,n)^\prime$, since for each $x\in X$ the function $\psi_x$ is an isometry, being a linear map determined by the orthogonal matrix $\boldsymbol{Y}(x)^{-1}$. Furthermore, $\Om$ is bijective with the inverse $\Om^{-1}$ given by $$\Om^{-1}(y^{(1)},\ldots ,y^{(n-1)})=\bigl(x,\w{\psi}_x(y^{(1)}),\ldots ,\w{\psi}_x(y^{(n-1)})\bigr),$$where $$x=\pm{y^{(1)}\wedge\ldots\wedge y^{(n-1)}\over\|y^{(1)}\wedge\ldots\wedge y^{(n-1)}\|}$$and the sign depends on which of the two components of $\Or(n-1,n)^\prime$ contains $(y^{(1)},\ldots ,y^{(n-1)})$. By the above formulas, $\Om$ is a $\mathcal{C}^\infty$-diffeomorphism.

Put $$Z=\{ (y^{(1)},\ldots ,y^{(n-1)})\in\Or(n-1,n)^\prime :\, (y^{(1)},y^{(2)})\in Z(h)\} .$$Then Lemma \ref{submersion2} implies $Z\in\I_{\Or(n-1,n)^\prime}$, since $Z(h)\in\I_\bot$ (i.e. $Z(h)\in\I_{\Or(2,n)}$) is the image of $Z$ through the $\mathcal{C}^\infty$-submersion $(y^{(1)},\ldots ,y^{(n-1)})\mapsto (y^{(1)},y^{(2)})$. Therefore, we have $\Om^{-1}(Z)\in\I_{S_+^{n-1}\times\Or(n-1,n-1)}$, hence $\Om^{-1}(Z)[x]\in\I_{\Or(n-1,n-1)}$ is valid $\I_{S_+^{n-1}}$-(a.e.), which translates into the fact that the set
\begin{equation*}
A(x):=\bigl\{(x^{(1)},\ldots ,x^{(n-1)})\in\Or(n-1,n-1):\,\bigl(\w{\psi}_x^{-1}(x^{(1)}),\w{\psi}_x^{-1}(x^{(1)})\bigr)\in Z(h)\bigr\}
\end{equation*}
belongs to $\I_{\Or(n-1,n-1)}$ for every $x\in S_+^{n-1}$ except a set from $\I_{S_+^{n-1}}$. By virtue of Lemma \ref{O(k,m)}, for each such $x$ we must have
\begin{equation}\label{OZ2}
\bigl\{(x^{(1)},x^{(2)})\in\Or(2,n-1):\,\bigl(\w{\psi}_x^{-1}(x^{(1)}),\w{\psi}_x^{-1}(x^{(2)})\bigr)\in Z(h)\bigr\}\in\I_{\Or(2,n-1)}.
\end{equation}
Hence, putting $\bot_x=\{(t,u)\in P_x\times P_x:\, (t,u)\in\bot\}$ we infer that the condition
\begin{equation}\label{OZ3}
h(t+u)=h(t)+h(u)\quad\I_{\bot_x^\ast}\mbox{-(a.e.)}
\end{equation}
is valid $\I_{S_+^{n-1}}$-(a.e.). Consequently, we may pick a particular $x\in S_+^{n-1}$ satisfying both \eqref{S0} and \eqref{OZ3}. By virtue of our inductive hypothesis and some isometry formalities (identifying $P_x$ with $\R^{n-1}$), condition \eqref{OZ3} yields the existence of an additive function $b_x\colon P_x\to G$ such that $h(t)=b_x(t)$ for $t\in P_x\setminus Y$ with a certain $Y\in\I_{P_x}$. Moreover, by an earlier argument, there is also an additive function $b_1\colon\R\to G$ such that $h(\lambda x)=b_1(\lambda)$ for $\lambda\in\R\setminus Z_1$ with a certain $Z_1\in\I_1$. Finally, there is a set $C_1\in\I_{\R\times P_x}$ with $(\lambda x,y)\in\bot^\ast\setminus Z(h)$ whenever $(\lambda ,y)\in (\R\times P_x)\setminus C_1$.

Define a function $b\colon\R^n\to G$ by the formula $b(\lambda x+y)=b_1(\lambda)+b_x(y)$ for $\lambda\in\R$ and $y\in P_x$. Then $b$ is additive and for each pair $(\lambda ,y)\in\R\times P_x$ outside the set $$C_1\cup (Z_1\times P_x)\cup (\R\times Y)\in\I_{\R\times P_x}$$we have $$h(\lambda x+y)=h(\lambda x)+h(y)=b_1(\lambda)+b_x(y)=b(\lambda x+y),$$which completes the proof.
\end{proof}

\begin{lemma}\label{L2}
If a function $h\colon\R^n\to G$ satisfies $h(x)=h(-x)$ $\I_n${\rm -(a.e.)\/} and $h(x+y)=h(x)+h(y)$ $\I_\bot${\rm -(a.e.)\/}, then there is an additive function $a\colon\R\to G$ such that $h(x)=a\left(\| x\|^2\right)$ $\I_n${\rm -(a.e.)\/}.
\end{lemma}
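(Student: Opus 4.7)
The goal is the ``even'' analogue of Lemma \ref{L1}: find an additive $a\colon\R\to G$ with $h(x)=a(\|x\|^2)$ $\I_n$-(a.e.). The natural device is the parallelogram identity: $(x+y)\perp(x-y)$ iff $\|x\|=\|y\|$. Introduce the $(2n-1)$-submanifold $L^\ast=\{(x,y)\in\R^n\times\R^n:\|x\|=\|y\|>0\}$, obtained as the preimage of the regular value $0$ of $(x,y)\mapsto\|x\|^2-\|y\|^2$ on $(\R^{2n})^\ast$, together with the two $\mathcal{C}^\infty$-diffeomorphisms $\Psi_\pm\colon L^\ast\to\bot^\ast$ given by $\Psi_+(x,y)=(x+y,x-y)$ and $\Psi_-(x,y)=(x+y,y-x)$. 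By Lemma \ref{111} the sets $\Psi_\pm^{-1}(Z(h))$ belong to $\I_{L^\ast}$. The map $(x,y)\mapsto x-y$ is a $\mathcal{C}^\infty$-submersion on the open co-null subset $L^\ast\setminus\{x=y\}$ (the diagonal being an $n$-dimensional submanifold of the $(2n-1)$-manifold $L^\ast$ and therefore $\I_{L^\ast}$-null by Lemma \ref{dim}), so the contrapositive of Lemma \ref{submersion2} forces the preimage of the $\I_n$-null ``non-evenness'' set $N_1=\{x:h(x)\neq h(-x)\}$ to be $\I_{L^\ast}$-null. Outside the union of these negligible sets,
\[
h(2x)=h(x+y)+h(x-y)=h(x+y)+h(y-x)=h(2y),
\]
and rescaling by the diffeomorphism $(x,y)\mapsto(2x,2y)$ yields the coincidence $h(x)=h(y)$ for $\I_{L^\ast}$-a.e. $(x,y)\in L^\ast$.

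Under the diffeomorphism $L^\ast\cong(0,\infty)\times S^{n-1}\times S^{n-1}$, $(x,y)\mapsto(\|x\|,x/\|x\|,y/\|y\|)$, this rephrases as a null set in the product, and Lemma \ref{222} (singling out the last $S^{n-1}$-factor) yields a direction $v_0\in S^{n-1}$ such that $h(ru)=h(rv_0)$ for $\I_{(0,\infty)\times S^{n-1}}$-a.e. $(r,u)$. Set $a(t):=h(\sqrt t\,v_0)$ for $t>0$ and $a(0):=0$; the diffeomorphism $\R^n\setminus\{0\}\cong(0,\infty)\times S^{n-1}$ combined with Lemma \ref{222} then delivers $h(x)=a(\|x\|^2)$ $\I_n$-(a.e.). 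To verify almost-additivity of $a$, parametrize the co-null open subset $\{(x,y)\in\bot^\ast:x,y\neq 0\}$ by the $\mathcal{C}^\infty$-diffeomorphism $\Xi(s,t,u,v)=(\sqrt s\,u,\sqrt t\,v)$ on $(0,\infty)^2\times\{(u,v)\in(S^{n-1})^2:u\perp v\}$ and pull back four negligible sets: $Z(h)$ along $\Xi$ itself (Lemma \ref{111}), and the $\I_n$-null set $\{x:h(x)\neq a(\|x\|^2)\}$ along each of the three maps $(s,t,u,v)\mapsto\sqrt s\,u$, $\sqrt t\,v$, and $\sqrt s\,u+\sqrt t\,v$. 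The first two factor as projections composed with diffeomorphisms and are $\mathcal{C}^\infty$-submersions; the third is a submersion by direct computation of its derivative, and its image avoids $0$ because $u\perp v$ precludes cancellation. By the contrapositive of Lemma \ref{submersion2}, all four pullbacks are negligible in the parameter manifold, so Fubini supplies, for $\I_2$-a.e. $(s,t)\in(0,\infty)^2$, a common witness $(u,v)$ making all four good events hold simultaneously, giving
\[
a(s+t)=h(\sqrt s\,u+\sqrt t\,v)=h(\sqrt s\,u)+h(\sqrt t\,v)=a(s)+a(t).
\]

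Apply de Bruijn's theorem to $a$ on $(0,\infty)$ exactly as in the final paragraph of the proof of Lemma \ref{L1} to obtain an additive function agreeing with the original $a$ almost everywhere on $(0,\infty)$, then extend to $\R$ by declaring $a(-t):=-a(t)$ and $a(0):=0$; the extension is additive on $\R$ and $h(x)=a(\|x\|^2)$ $\I_n$-(a.e.) by construction. The main obstacle is the verification that the sum map $(s,t,u,v)\mapsto\sqrt s\,u+\sqrt t\,v$ is a $\mathcal{C}^\infty$-submersion onto $\R^n\setminus\{0\}$: this is precisely what permits the three a.e. coincidences $h(x)=a(\|x\|^2)$ at $x$, $y$, and $x+y$ to be aligned at a common witness $(s,t,u,v)$ for $\I_2$-a.e. $(s,t)$, and without it one cannot close the additivity loop for $a$.
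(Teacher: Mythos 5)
Your argument is correct and follows essentially the same route as the paper: both exploit the linear change of variables identifying $\bot^\ast$ with $\{(x,y):\|x\|=\|y\|>0\}$ (the paper via $\Phi(t,u)=(t+u,u-t)$, you via its inverse $\Psi_\pm$) together with a.e.\ evenness to conclude that $h$ is a.e.\ constant on a.e.\ sphere, then reduce via Fubini and a submersion argument to an almost additive function of $\|x\|^2$ on $(0,\infty)$ and invoke de Bruijn. The only cosmetic differences are that you fix a single reference direction $v_0$ instead of the paper's sphere-constant auxiliary function $g$, and you verify almost-additivity of $a$ through the explicit polar parametrization of $\bot^\ast$ rather than through the slice sets $D$, $E_x$.
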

\begin{proof}
For any $r\geq 0$ let $S^{n-1}(r)=\{ x\in\R^n:\,\|x\|=r\}$. By the natural identification, we have $(\R^n)^\ast\sim (0,\infty)\times S^{n-1}$. Therefore, for every $A\in\I_n$ there is a set $R(A)\in\I_{(0,\infty)}$ such that $A\cap S^{n-1}(r)\in\I_{S^{n-1}(r)}$ for $r\in (0,\infty)\setminus R(A)$. In the first part of the proof we will show the following claim: there exists a set $A\in\I_n$ such that for each $r\in (0,\infty)\setminus R(A)$ the function $h$ is constant $\I_{S^{n-1}(r)}$-(a.e.) on $S^{n-1}(r)$, more precisely -- that $h|_{S^{n-1}(r)}$ is constant outside the set $A\cap S^{n-1}(r)$.

We start with the following observation: there is $T\in\I_\bot$ such that $h(t+u)=h(u-t)$ whenever $(t,u)\in\bot^\ast\setminus T$. Let $E=\{ x\in\R^n:\, h(x)=h(-x)\}$ and $H=(-D(h))\cap D(h)\cap E$; then $\R^n\setminus H\in\I_n$. Define
\begin{equation}\label{Tdef}
T=\{ (t,u)\in\bot^\ast:\, t\not\in H\}\cup\{ (t,u)\in\bot^\ast:\, t\in H\mbox{ and }u\not\in E_t(h)\cap E_{-t}(h)\} .
\end{equation}
Then for every $(t,u)\in\bot^\ast\setminus T$ we have $h(t+u)=h(t)+h(u)$ and $h(u-t)=h(u)+h(-t)$. Moreover, we have also $h(t)=h(-t)$, hence $h(t+u)=h(u-t)$, as desired. In order to show that $T\in\I_\bot$ note that it is equivalent to $T\cap\bot^\prime\in\I_{\bot^\prime}$, where $\bot^\prime$ may be identified with $X\times\R^{n-1}$. The first summand in \eqref{Tdef}, after intersecting with $\bot^\prime$, is then identified with $(X\setminus H)\times\R^{n-1}\in\I_{2n-1}$, whereas for each pair $(t,u)$ from the second summand we have either $(t,u)\in Z(h)$, or $(-t,u)\in Z(h)$, which shows that it belongs to $\I_\bot$. Consequently, $T\in\I_\bot$.

Define $\Phi\colon\bot^\ast\to\R^n\times\R^n$ by putting $\Phi(t,u)=(t+u,u-t)$. It is evident that $\Phi$ is a $\mathcal{C}^\infty$-immersion and yields a homeomorphism between $\bot^\ast$ and $$M:=\Phi(\bot^\ast)=\bigcup_{r\in (0,\infty )}(S^{n-1}(r)\times S^{n-1}(r)).$$Therefore, \cite[Theorem 11.17]{tu} implies that $M$ is a manifold. Moreover, $\Phi\colon\bot^\ast\to M$ is a $\mathcal{C}^\infty$-diffeomorphism, thus $\Phi(T)\in\I_M$. Since the mapping $(x,y)\mapsto (x,y/\|x\|)$ yields $M\sim (\R^n)^\ast\times S^{n-1}$, there exists a set $A\in\I_n$ such that for every $x\in\R^n\setminus A$ we have $$(x,y)\not\in\Phi (T)\quad\I_{S^{n-1}(\| x\| )}\mbox{-(a.e.)}.$$By the property of the set $T$, $(x,y)\not\in\Phi (T)$ implies $h(x)=h(y)$. Now, for any $r\in (0,\infty )\setminus R(A)$ and for arbitrary $x,y\in\R^n\setminus A$ with $\| x\| =\| y\| =r$, we have $$(x,z), (y,z)\not\in\Phi (T)\quad\I_{S^{n-1}(r)}\mbox{-(a.e.)},$$hence $h(x)=h(z)=h(y)$, which completes the proof of our claim.

There is a function $g\colon\R^n\to G$ which is constant on every sphere $S^{n-1}(r)$ and such that $h(x)=g(x)$ for $x\in\R^n\setminus A$. Therefore, there is also a function $\p\colon [0,\infty)\to G$ satisfying $g(x)=\p\left(\|x\|^2\right)$ for every $x\in\R^n$. We are going to show that
\begin{equation}\label{phi}
\p(\lambda+\mu)=\p(\lambda)+\p(\mu)\quad\Om(\I_{(0,\infty)})\mbox{-(a.e.)}.
\end{equation}

Put $$B=\{ (x,y)\in\bot^\ast :\mbox{ either }x\in A\mbox{, or }y\in A\mbox{, or }x+y\in A\}$$and observe that $B\in\I_\bot$, whence also $Z:=Z(h)\cup B\in\I_\bot$. Let $$D=\{ x\in (\R^n)^\ast :\, (x,y)\not\in Z\,\,\,\,\I_{P_x}\mbox{-(a.e.)}\} .$$By an argument similar to the one applied to $D(h)$, we infer that $X\setminus D\in\I_X$, hence $\R^n\setminus D\in\I_n$. For each $x\in\R^n$ put $E_x=\{ y\in P_x:\, (x,y)\not\in Z\}$; then $P_x\setminus E_x\in\I_{P_x}$ provided $x\in D$. Let also $D^\prime=\{\|x\|^2:\, x\in D\}$; then $(0,\infty)\setminus D^\prime\in\I_{(0,\infty)}$.

Fix arbitrarily $\lambda\in D^\prime$ and choose any $x\in D$ satisfying $\sqrt{\lambda}=\|x\|$. Put $E(\lambda)=\{\|y\|^2:\, y\in E_x\}$ (then $(0,\infty)\setminus E(\lambda)\in\I_{(0,\infty)}$) and pick any $\mu\in E(\lambda)$. Then $\sqrt{\mu}=\|y\|$ for some $y\in E_x$, which implies $(x,y)\not\in Z$. Applying the facts that $x+y\not\in A$, $(x,y)\not\in Z(h)$, $x\not\in A$ and $y\not\in A$, consecutively, we obtain
\begin{equation*}
\begin{split}
\p(\lambda+\mu)&=g(x+y)=h(x+y)\\
&=h(x)+h(y)=g(x)+g(y)=\p(\lambda)+\p(\mu),
\end{split}
\end{equation*}
which proves \eqref{phi}.

By the theorem of de Bruijn, there is an additive function $a\colon\R\to G$ such that $\p(\lambda)=a(\lambda)$ for $\lambda\in [0,\infty)\setminus Y$ with $Y\in\I_{[0,\infty)}$. Then the equality $h(x)=a\left(\|x\|^2\right)$ holds true for $x\in\R^n\setminus (A\cup C)$, where $C=\{ x\in\R^n:\,\|x\|^2\in Y\}\in\I_n$. Thus, the proof has been completed.
\end{proof}

To finish the proof of our Theorem we shall combine Lemmas \ref{L0}, \ref{L1} and \ref{L2} to get additive functions $a\colon\R\to G$ and $b\colon\R^n\to G$ such that $$2\left(f(x)-a(\|x\|^2)-b(x)\right)=0\quad\I_n\mbox{-(a.e.)}.$$The only thing left to be proved is the following fact in the spirit of \cite[Lemma 2]{baron_ratz}.
\begin{lemma}
If a function $h\colon\R^n\to G$ satisfies $2h(x)=0$ $\I_n${\rm -(a.e.)\/} and $h(x+y)=h(x)+h(y)$ $\I_\bot${\rm -(a.e.)\/}, then $h(x)=0$ $\I_n${\rm -(a.e.)\/}.
\end{lemma}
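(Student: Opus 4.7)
The plan is to split $h$ into its odd and even parts, apply Lemmas~\ref{L1} and~\ref{L2}, and use the $2$-torsion hypothesis together with the divisibility of $\R^n$ to force both resulting additive pieces to vanish identically.

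First I would set $h_1(x):=h(x)-h(-x)$, which is odd everywhere and, by the argument of Lemma~\ref{L0}, is $\I_\bot$-a.e.\ orthogonally additive; from $2h=0$ $\I_n$-a.e.\ one obtains $2h_1=0$ $\I_n$-a.e. Lemma~\ref{L1} then yields an additive $b\colon\R^n\to G$ with $h_1=b$ $\I_n$-a.e., whence $2b=0$ $\I_n$-a.e. The kernel of the additive map $2b$ is a subgroup of $\R^n$ containing a co-negligible set, so translation-invariance of $\I_n$ (a consequence of (H$_1$)) forces it to equal $\R^n$: for any $y\in\R^n$ the co-negligible sets $\ker 2b$ and $y+\ker 2b$ must intersect, and at any common $x$ one has $2b(y)=2b(x)+2b(y-x)=0$. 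Hence $b$ maps into the $2$-torsion subgroup $G[2]:=\{g\in G:2g=0\}$, and divisibility of $\R^n$ then collapses $b$ entirely via $b(x)=2b(x/2)=0$. Thus $h_1=0$ $\I_n$-a.e., i.e.\ $h(x)=h(-x)$ $\I_n$-a.e.

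With this symmetry, Lemma~\ref{L2} applies to $h$ and produces an additive $a\colon\R\to G$ with $h(x)=a(\|x\|^2)$ $\I_n$-a.e. The hypothesis $2h=0$ $\I_n$-a.e.\ then yields $2a(\|x\|^2)=0$ on a co-negligible subset of $\R^n$; passing through the diffeomorphism $\R^n\setminus\{0\}\sim(0,\infty)\times S^{n-1}$ and using (H$_2$) together with $S^{n-1}\notin\I_{S^{n-1}}$, one deduces that $\{t\in(0,\infty):2a(t)\ne 0\}$ lies in $\I_1$, and oddness of the additive map $a$ extends this to all of $\R$. The same kernel/subgroup argument pushes $2a\equiv 0$ on $\R$, after which divisibility of $\R$ forces $a\equiv 0$. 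Consequently $h(x)=a(\|x\|^2)=0$ $\I_n$-a.e.

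The main conceptual obstacle is the two-stage promotion from almost-everywhere to identical vanishing: the translation-invariant subgroup argument takes one from $2b=0$ a.e.\ to $2b\equiv 0$, and then the purely algebraic observation that every homomorphism from a divisible group into a group of exponent $2$ is trivial eliminates $b$; the same pattern is then repeated for $a$ after transferring from $\R^n$ down to $\R$ via polar decomposition. Apart from these two reductions, the proof is a routine assembly of the already-proved lemmas.
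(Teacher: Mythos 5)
Your proof is correct, and its skeleton (odd part $h(x)-h(-x)$, then Lemmas~\ref{L0} and~\ref{L1}; then symmetry and Lemma~\ref{L2}) is exactly the paper's. Where you diverge is in how the two additive representatives are killed. You first upgrade \lq\lq $2b=0$ $\I_n$-a.e.\rq\rq\ to $2b\equiv 0$ everywhere via a Steinhaus-type argument (translation invariance of $\I_n$ from (H$_1$) plus the fact that two co-negligible sets must meet, since the ideal is proper), and only then invoke divisibility to get $b\equiv 0$; for $a$ you additionally transfer the a.e.\ statement from $\R^n$ down to $(0,\infty)$ through polar coordinates and (H$_2$). All of these steps are sound. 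The paper short-circuits both reductions: it never proves that $b$ or $a$ vanishes identically, but instead writes $g(x)=b(x)=2b(x/2)=2h(x/2)-2h(-x/2)=0$ $\I_n$-a.e.\ (using additivity of $b$ to halve the argument, then substituting $b=g$ back a.e.\ and applying $2h=0$ a.e.\ at the halved points, which is legitimate because $x\mapsto x/2$ preserves $\I_n$ by (H$_1$)), and analogously $h(x)=a(\|x\|^2)=2a(\|x/\sqrt2\|^2)=2h(x/\sqrt2)=0$ a.e. So the paper's route avoids both the translation-invariance lemma and the polar-coordinate Fubini transfer, at the price of being a less transparent one-liner; your route is longer but more robust and makes explicit the general principle that an additive map vanishing $\I_n$-a.e.\ into a group killed by doubling must vanish identically.
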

\begin{proof}
For every $x\in\R^n$ put $g(x)=h(x)-h(-x)$. Applying Lemmas \ref{L0} and \ref{L1} we get an additive function $b\colon\R^n\to G$ such that $g(x)=b(x)$ $\I_n$-(a.e.). Therefore $$g(x)=2b\left({x\over 2}\right)=2h\left({x\over 2}\right)-2h\left(-{x\over 2}\right)=0\quad\I_n\mbox{-(a.e.)},$$i.e. $h(x)=h(-x)$ $\I_n$-(a.e.). Now, by virtue of Lemma \ref{L2}, there is an additive function $a\colon\R\to G$ satisfying $h(x)=a(\|x\|^2)$ $\I_n$-(a.e.). Consequently, $$h(x)=a\left(2\left\|{1\over\sqrt{2}}x\right\|^2\right)=2a\left(\left\|{1\over\sqrt{2}}x\right\|^2\right)=2h\left({1\over\sqrt{2}}x\right)=0\quad\I_n\mbox{-(a.e.)}.$$\qedhere
\end{proof}

\bibliographystyle{amsplain}

\end{document}